\newtheorem{theorem}{Theorem}[section]
\newtheorem{remark}{Remark}[section]
\newtheorem{proposition}{Proposition}[section]
\title{Wave Patterns in A Nonclassic Nonlinearly-Elastic Bar Under Riemann Data\footnote{This work was supported by a GRF grant from
Hong Kong Research Grants Council (Project No. CityU 11303015) and
the National Natural Science Foundation of China (Grant Nos.
11301005, 11301006,11572272) and Anhui Provincial Natural Science
Foundation (Grant No. 1408085MA01). K. R. Rajagopal thanks the
Office of Naval Research for support of this work.}}
\author{Shou-Jun Huang$^{a}$,\;\;K. R. Rajagopal$^b$,\;\;Hui-Hui Dai$^c$\footnote{Corresponding
author.
 Email address: mahhdai@cityu.edu.hk (Hui-Hui Dai)}\\
$^a$School of Mathematics and Computer Science, Anhui Normal
University,\\ Wuhu 241002, P.R.China\\
$^b$Department of Mechanical Engineering, Texas A \& M University,\\
College Station, TX 77840, USA\\
$^c$Department of Mathematics, City University of Hong Kong,\\
Kowloon, Hong Kong, P.R.China}\date{ }
\begin{document}
\maketitle\vspace{-10mm}
\begin{abstract}Recently there has been interest in studying a new class of elastic
materials, which is described by implicit constitutive relations.
Under some basic assumption for elasticity constants, the system of
governing equations of motion for this elastic material is strictly
hyperbolic but without the convexity property. In this paper, all
wave patterns  for the nonclassic nonlinearly elastic materials
under Riemann data are established  completely by separating the
phase plane into twelve disjoint regions and by using
 a nonnegative dissipation rate assumption and the maximally dissipative kinetics at any stress discontinuity.
 Depending on the initial data, a
variety of wave patterns can arise, and in particular there exist
composite waves composed of a rarefaction wave and a shock wave. The
solutions for a physically realizable case are presented in detail,
which may be used to test whether the material belongs to the class
of classical elastic bodies or the one wherein the stretch is
expressed as a function of the stress.

 \noindent{\bf Key words and phrases}:  implicit constitutive
relation, Riemann problem, kinetic relations, wave patterns

\noindent{\bf 2010 Mathematics Subject Classification}: 35L65,
35L45, 74B20
\end{abstract}
\newpage
\section{Introduction}
Until recently, models used to describe the elastic response of
bodies belonged to either the class of Cauchy elastic bodies or
Green elastic bodies. Recently, Rajagopal \cite{raja, raja1}
introduced a much larger class of elastic bodies that included
Cauchy elastic bodies and Green elastic bodies as a subset, if by
elastic response one refers to a response wherein the body is
incapable of dissipating energy, that is, inability to convert
working into thermal energy. Of particular reference to the current
work are bodies defined by implicit constitutive relations between
the stress and the deformation gradient, or the sub-class wherein
the strain in the body is a function of the stress. Such models are
relevant when one has a material wherein the body exhibits a
limiting strain or when the response between the strain and stress
becomes non-linear even for very small strains wherein the classical
models of elasticity reduce to the classical linearized elastic
model. When the elastic body exhibits limiting strain then one could
encounter the possibility that the stress cannot be expressed as a
function of the strain (see Rajagopal \cite{raja}). A detailed
mathematical treatment of such a response can be found in Bulicek et
al. \cite{buli}. With regard to the possibility of a non-linear
relationship between the strain and the stress, even when the
strains are very small, one needs but look at the response of alloys
such as Gum metal  (see Saito et al. \cite{saito}) and many other
Titanium Nickel based alloys (see Talling et al. \cite{Tall}, Withey
\cite{withey}, Zhang \cite{zhang}). The response of such alloys
cannot be described by the classical linearized elastic response but
can be described very well with the help of the new class of elastic
models wherein the linearized strain is a non-linear function of the
stress (see Rajagopal \cite{raja21}). Another very important class
of problems where the new class of models might prove to be very
useful is in predicting the state of strain in the neighborhood of
cracks and the tips of notches, etc. While the linearized theory of
elasticity predicts strains that blow up in the neighborhood of the
tip of a crack, contradicting the very precepts under which the
approximation is derived, the new class predicts results that are
physically meaningful in that the strains are bounded and never
exceed the limit of small strain that is supposed (see Rajagopal and
Walton \cite{raja4}, Kulvait, et al. \cite{kul}).

Nonlinear waves in elastic bars, within the traditional framework
that the stress is a function of the strain, have been studied in
various contexts. For example, recently Huang, Dai, Chen and Kong
\cite{huang1} showed that for certain nonlinearly elastic materials,
it is possible to generate a phenomenon in which a tensile wave can
catch the first transmitted compressive wave (so the former can be
undermined) in an initially stress-free two-material bar. Depending
on the interval of the initial impact, the wave catching-up
phenomena can happen in two wave patterns. Some asymptotic solutions
were also constructed. As a continuation of this work, Huang, Dai
and Kong \cite{huang} investigated the wave catching-up phenomenon
in a nonlinearly elastic prestressed two-material bar and the global
structure stability of nonlinear waves was also proved by the method
of characteristics and the theory of typical boundary problems. An
interesting study on impact-induced phase transformation in a shape
memory alloy rod was carried out by Chen and Lagoudas \cite{chenyi},
and notably they also found that composite waves with a rarefaction
wave and a shock wave can arise.

In this paper, we study the Riemann problem for a specific sub-class
of the new class of elastic bodies proposed by Rajagopal and focus
on the various wave patterns.  These equations do not possess
convexity though they are strictly hyperbolic. In this study, the
Reimann problem for this special sub-class is solved completely. We
find that, depending on the initial condition, a variety of wave
patterns can arise including a composite wave comprising of a
rarefaction wave and a shock wave. We also note that due to the
implicit constitutive relation (\ref{16}), it is natural to select
the velocity and the stress as the unknowns. Within such a
framework, the equations of motion governing the sub-class of bodies
under consideration cannot be written in terms of the type of
conservation laws that hold for the classical elastic body.

To introduce the kind of constitutive relation adopted in this
paper, we first recall some basic definitions in kinematics. The
reference configuration, denoted by $\mathcal {B}$, is assumed to be
stress-free. A particle ${\bf X}\in\mathcal{B}$ occupies the
position ${\bf x}\in\mathcal{B}_t$, where $\mathcal{B}_t$ is the
configuration at time $t$, that is referred to as the current
configuration. The mapping that maps the reference configuration to
the current configuration is assumed to be one to one, and is given
by ${\bf x}={\bf\chi}({\bf X},t)$. We denote the displacement by
${\bf u}={\bf x}-{\bf X}$. Then the gradients of displacement are
given as
\begin{equation*}\label{3}\frac{\partial{\bf u}}{\partial{\bf X}}=\nabla_{\bf X}{\bf u}={\bf F}-{\bf
I}\quad \text{or}\quad \frac{\partial{\bf u}}{\partial{\bf
x}}=\nabla_{\bf x}{\bf u}={\bf I}-{\bf F}^{-1},\end{equation*} where
${\bf F}=\frac{\partial{\bf x}}{\partial{\bf X}}$ is the deformation
gradient tensor, and ${\bf I}$ is the identity tensor. The
Green-Saint Venant strain ${\bf E}$ is given by
\begin{equation}\label{8}{\bf E}=\frac12\left(\nabla_{\bf X}{\bf u}+(\nabla_{\bf X}{\bf u})^T+(\nabla_{\bf X}{\bf u})^T\nabla_{\bf X}{\bf u}\right).\end{equation}

When one assumes that the displacement gradient is small so that the
last term that appears in the right hand side of (\ref{8}) can be
ignored in comparison to the other terms, one obtains the linearized
measure of strain. The constitutive relation for elastic response
within the classical theory of Cauchy or Green elasticity then leads
to the popular approximation of linearized elasticity.  Recently,
Rajagopal \cite{raja} (see also Rajagopal \cite{raja1},
\cite{raja2}, \cite{raja21}) introduced the following implicit
constitutive relation for isotropic elastic materials
\begin{equation}\label{9}{\bf f}({\bf T}, {\bf B})={\bf 0},\end{equation}
where ${\bf B}={\bf F}{\bf F^T}$ is the left Cauchy-Green strain
tensor and ${\bf T}$ is the Cauchy stress tensor. The general class
(\ref{9}) includes Cauchy elastic bodies as a special sub-class and
another special subclass that is useful and is given by
\begin{equation}\label{10}{\bf B}=\tilde{\alpha}_0{\bf I}+\tilde{\alpha}_1{\bf T}+\tilde{\alpha}_2{\bf T}^2,\end{equation}
where the materials moduli  $\tilde{\alpha}_i (i=1,2,3)$ depend on
the density and the principal invariants of the Cauchy stress.
 Under the small strain
assumption
\begin{equation*}\label{11}\max_{{\bf X}\in\mathcal{B},t\in\mathbb{R}}||\nabla_{\bf X}{\bf u}||=O(\delta),\quad \delta\ll1,\end{equation*}
where $||\cdot||$ denotes the trace norm, Rajagopal \cite{raja}
obtained the approximation with $O(\delta)$ from (\ref{10}) as
follows
\begin{equation*}\label{12}{\bf \epsilon}=\alpha_0{\bf I}+\alpha_1{\bf T}+\alpha_2{\bf T}^2,\end{equation*}
where as usual the materials moduli  $\alpha_i (i=1,2,3)$ depend on
the density in current configuration and the principal invariants of
Cauchy stress, $\epsilon$ is the linearized strain tensor. In
particular, Kannan,  Rajagopal and Saccomandi  \cite{raja3} proposed
the following special constitutive relation:
\begin{equation}\label{13}{\bf \epsilon}=\beta(\text{tr}{\bf T}){\bf I}+\alpha\left(1+\frac{\gamma}{2}\text{tr}{\bf T^2}\right)^n{\bf T},\end{equation}
where $\alpha\geq0, \beta\leq0, \gamma\geq0$ and $n$ are constants.

There have been many studies carried out within the context of the
new class of elastic bodies defined by (\ref{13}). Of relevance to
the current study is the paper by Kannan,  Rajagopal and Saccomandi
\cite{raja3}, wherein they investigated the unsteady motions  of
this new class of elastic solids. It was shown that the stress wave
changes its shape since the wave speed depends on the stress and the
value of stress varies according to the thickness of the slab. All
these phenomena for the generated stress wave are quite different
from what one observes for a classical linear elastic material.


When we restrict the constitutive relation (\ref{13}) to one
dimension, we obtain the one-dimensional constitutive relation
\begin{equation}\label{16}\epsilon=\beta T+\alpha\left(1+\frac{\gamma}{2}T^2\right)^nT.\end{equation}
We will assume that the constants in (\ref{16}) satisfy that
\begin{equation}\label{17}\alpha>0,\quad \beta<0,\quad \gamma>0,\quad n>0.\end{equation}
Moreover, we suppose that
\begin{equation}\label{18}\alpha+\beta>0.\end{equation}
\begin{remark}The assumption (\ref{18}) guarantees the following governing system of equations (\ref{14})  is  hyperbolic.\end{remark}

In this paper, we consider the Riemann problem for nonlinear wave
equations
\begin{equation}\label{14}\rho\frac{\partial v}{\partial t}=\frac{\partial T}{\partial x},\quad \frac{\partial \epsilon}{\partial t}
=\frac{\partial v}{\partial x}\end{equation} with the initial data
\begin{equation}\label{15}(T,v)(0,x)=\left\{\begin{array}{ll}(T_l,v_l),&x<0,\vspace{2mm}\\(T_r,v_r),&x>0,\end{array}\right.\end{equation}
where $t, x$ represent the time and spatial coordinate respectively,
$\rho$ the density of elastic body, $T$ the Cauchy stress,
$\epsilon$ the strain, $v$ the particle velocity. The constant
Riemann data in (\ref{15}) satisfy that $(T_l,v_l)\neq(T_r,v_r).$

Riemann problem for PDEs is of significance not only in  physics,
but also in mathematics. It is well-known that the Riemann problem
can be used as a building block to prove existence results for the
Cauchy problem for (\ref{14}) with general initial data
\cite{glimm}, possibly having large total variation \cite{bressan}.

 For the gas dynamics equations with convex condition,
the Riemann problem has been well-studied (see \cite{ch},
\cite{smoller}). Wendroff \cite{wen, wen1} investigated the gas
dynamics equations without convexity conditions for the pressure and
constructed a solution to the Riemann problem.  Liu \cite{liu, liu1}
considered the Riemann problem for general systems of conservation
laws. By introducing an extended entropy condition, which is
equivalent to the Lax's shock inequalities \cite{lax} when the
system is genuinely nonlinear, Liu \cite{liu1} proved the uniqueness
theorem for the Riemann problem of the gas dynamics equations
without convexity conditions for the pressure. By a special
vanishing viscosity method, Dafermos \cite{dafer} obtained the
structure of solutions of the Riemann problem for a general
$2\times2$ conservation laws.
 Matsumura and Mei \cite{mei}
considered the nonlinear asymptotic stability of viscous shock
profile for a one-dimensional  system of viscoelasticity, where the
constitutive relation is non-convex. They applied the degenerate
shock condition proposed by Nishihara \cite{nish} to single out an
admissible shock solution. By introducing a generalized shock in
\cite{nish}, Sun and Sheng \cite{sun} constructed the solutions to
the Riemann problem for a system of nonlinear degenerate wave
equations in elasticity, for which the strain-stress function is
nonconvex. For the same equations, by using the Liu-entropy
condition in \cite{liu1} alternatively, Liu and Wang \cite{liux}
completely obtained the corresponding Riemann solutions, some of
which are different from those in \cite{sun}.

By Liu-entropy condition, LeFloch and Thanh \cite{lefloch} uniquely
solved the Riemann problem for a nonlinear hyperbolic system
describing phase transitions in elastodynamics. But it is noted that
the elastic model in \cite{lefloch} is different from the present
material by comparing the assumptions (1.3) in \cite{lefloch} and
(\ref{16}). A more related work was done by Tzavaras \cite{tza}, who
studied the Riemann problem for the equations of one-dimensional
isothermal elastic materials by taking viscosity to be zero in the
equations of viscoelasticity. Wendroff criterion was applied at
shocks to select a physically admissible one. For more recent
results on Riemann problems, one may refer to Chapter IX and
references therein by Dafermos \cite{dafer1} or a monograph by
LeFloch \cite{lefloch1}.

We note that while there are a number of analytical solutions
available for classical elastic materials, there are few ones for
the previously mentioned nonclassical materials. Considering the
importance of analytical solutions and the Riemann problem, we shall
solve (\ref{14}) and (\ref{15}) with the implicit constitutive
relation analytically. The Riemann problem (\ref{14})-(\ref{15}) is
different from the classical one since the linearized strain is a
function of stress and this function is nonconvex (cf. Remark 2.3).
From the application point of view, the mathematical results on
Riemann problem may be used to test whether the material belongs to
the class of classical elastic bodies or the ones with an implicit
constitutive relation by comparing the wave patterns in a designed
experiment. Remarkably, we find that there are twelve wave patterns
for the considered material, while there is only one wave pattern
for a classical one with the small strain.
 We remark that the well-posedness for the Cauchy
problem for one-dimensional strictly hyperbolic equations with small
initial data has been established by Bianchini and Bressan
\cite{bressan}. Error estimates for the Glimm approximate solution
and the vanishing viscosity solution were derived in \cite{fabio}.

The remaining organization of this paper is as follows: In Section
2, we briefly recall some admissibility criteria for weak solutions
of hyperbolic equations. Section 3 is devoted to constructing the
elementary waves for our system (\ref{14}) and in Section 4, we
provide all the solutions to the Riemann problem
(\ref{14})-(\ref{15}) case by case. Section 5 is devoted to studying
the Riemann solution in detail for a physically realizable situation
$v_l=v_r=0$.

\section{Admissibility criteria}
In the community of hyperbolic equations, one often uses the Lax
entropy inequality \cite{lax} to single out the unique weak solution
for genuinely nonlinear hyperbolic equations; while for general
hyperbolic equations without convexity, the Liu-entropy condition
(\cite{liu},\cite{liu1}) is a preferable candidate, which can be
viewed as a generalization of Oleinik entropy condition
\cite{dafer1} for scalar hyperbolic conservation laws. The
Liu-entropy condition reads
\begin{equation}\label{n7}s(U_l,U_r)\leq s(U_l,U)\;\text{for every}\;U\;\text{between}\;U_l\;\text{and}\;U_r,\end{equation}
where $s(U_l,U_r)$ is the speed of a shock connecting the left state
$U_l$ and the right state $U_r$. Both Lax entropy inequality and
Liu-entropy condition have been justified by the method of vanishing
viscosity (see  Chapter VIII in Dafermos \cite{dafer1}). It is also
worthy to point out that these two criteria can guarantee the shock
waves are stable \cite{dafer1}.

In the mechanics community, often a more physical selection
criterion is used.  Knowles \cite{knowles} investigated the
impact-induced tensile waves in a semi-infinite bar made of a
rubberlike material. The governing system of equations is strictly
hyperbolic, but genuine nonlinearity fails. He succeeded in
constructing the corresponding solutions according to three regimes
of response, depending on the intensity of the loading. For the
intermediate case, there is a one-parameter family of solutions to
the initial-boundary problem. In order to select the unique
admissible solution,  Knowles \cite{knowles} introduced the concept
of {\it driving force} defined via the dissipation rate (see also
related discussions in the monograph by Abeyaratne and Knowles
\cite{abe}) and the kinetic relations. By Eqs. (\ref{14}) and the
Rankine-Hugoniot conditions, the dissipation rate with shock waves
can be written as  (cf. \cite{abe}, \cite{knowles})
\begin{equation}\label{n1}D(t)=f(t)\,s,\end{equation}
where $f(t)$ is the {\it driving force} per unit cross-sectional
area acting at time $t$ and can be computed in terms of the stresses
on either side of the jump by
\begin{equation}\label{n2}f(t)=\int_{T_r}^{T_l}\varepsilon(y)dy+\frac{\varepsilon(T_r)+\varepsilon(T_l)}{2}(T_r-T_l),\end{equation}
$s=s(t)$ is the speed of a stress discontinuity. For the present
model (\ref{16}), we have
\begin{equation}\label{n3}f(t)=\frac{\alpha}{(n+1)\gamma}[F(T_l,T_r)-F(T_r,T_l)],\end{equation}
where $F(x,y)=(1+\frac{\gamma}{2}x^2)^n(1-\frac{n}{2}\gamma
x^2+\frac{n+1}{2}\gamma xy).$ By detailed but straightforward
analysis, the driving force $f(t)$ can be depicted as in Fig.
\ref{fig0}.
\begin{figure}[ht!]\begin{center}
\includegraphics[width=0.32\textwidth]{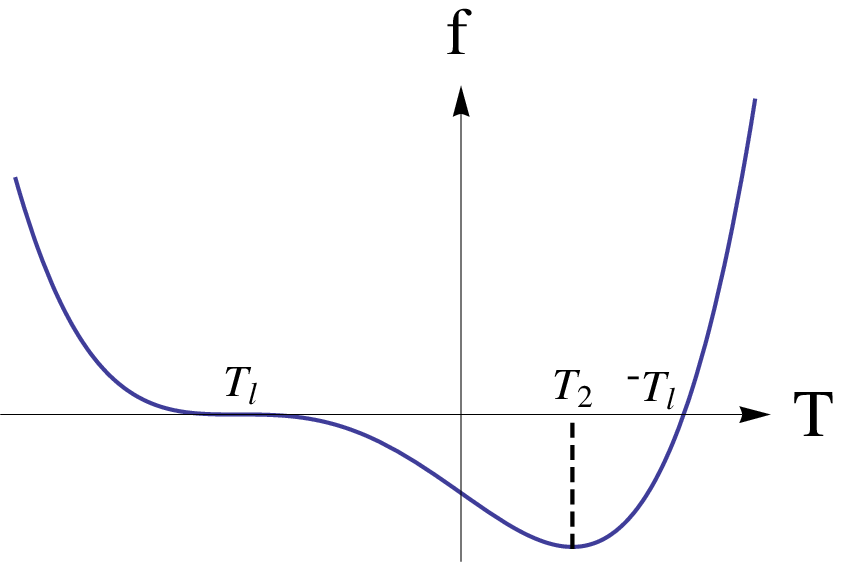}
\includegraphics[width=0.32\textwidth]{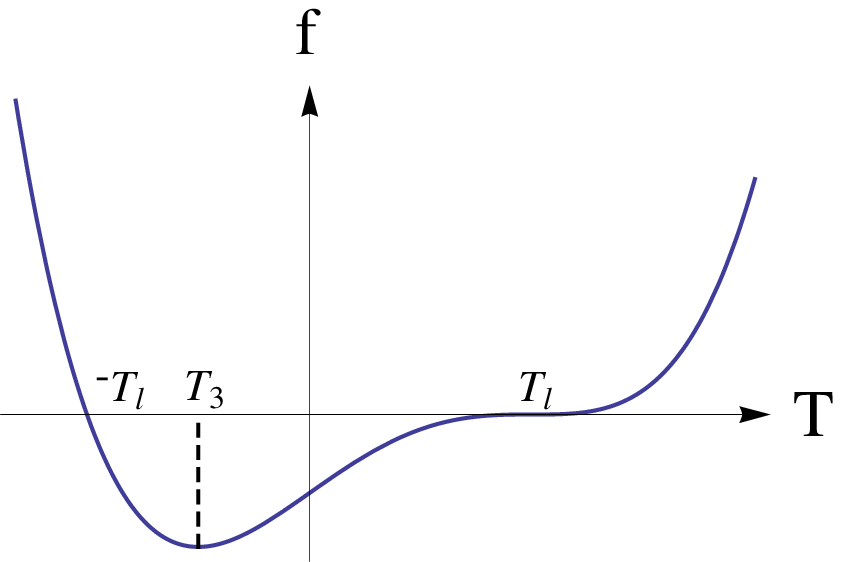}
\includegraphics[width=0.32\textwidth]{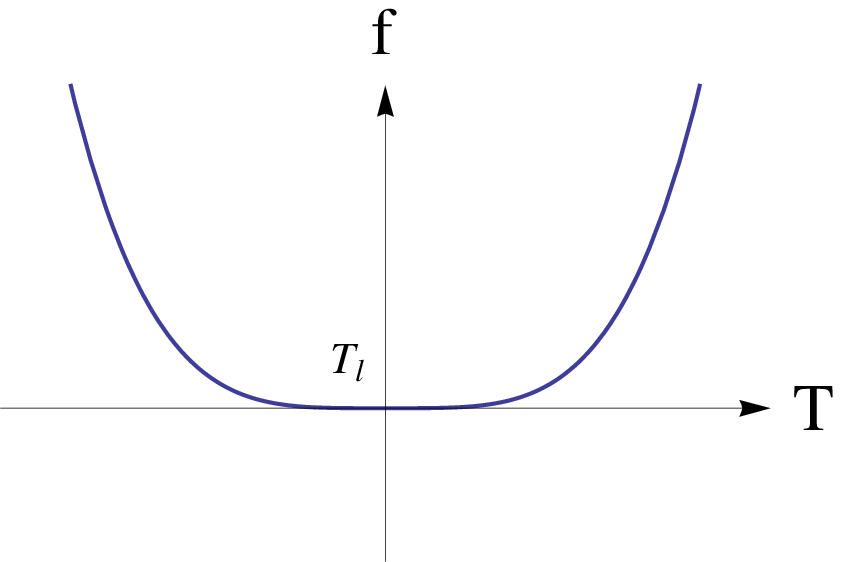}
  \renewcommand{\figurename}{Fig.} \caption{Plots for $f(t)$: $T_l<0$ (left), $T_l>0$(middle) and $T_l=0$ (right), where $T_2, T_3$ are defined through (\ref{38}). } \label{fig0}
 \end{center}
\end{figure}
The second law of thermodynamics requires $D(t)\geq0$. As in Knowles
\cite{knowles},  a solution to the Riemann problem (\ref{14}),
(\ref{15}) is called {\it physically admissible} if
\begin{equation}\label{n4}D(t)=f(t)\,s\geq0\;\;\text{for}\;\;t>0,\end{equation}
 equivalently,
\begin{equation}\label{n5}s\,(T_r-T_l)(T_r+T_l)\geq0\;\;\text{for}\;\;t>0,\end{equation}
at every stress discontinuity.  In order to obtained the uniqueness
of the solution to the corresponding initial boundary value problem,
Knowles \cite{knowles} considered a special kinetic relation:
maximally dissipative kinetics, which requires that

\begin{equation}\label{n6}\text{either}\;\;\frac{\sigma(\gamma^+)-\sigma(\gamma^-)}{\gamma^+-\gamma^-}=\sigma'(\gamma^+)
\;\;\text{or}\;\;\frac{\sigma(\gamma^+)-\sigma(\gamma^-)}{\gamma^+-\gamma^-}=\sigma'(\gamma^-)\end{equation}
must hold, where $\sigma=\sigma(\gamma)$ is the stress-response
function, $\gamma^{\pm}$ are the strains on either side of a
discontinuity.

In the next section, we shall discuss all backward and forward wave
curves according to the cases: $T_l<0$, $T_l>0$ and $T_l=0$. For the
case $T_l<0$, by the criterion (\ref{n5}) it is possible to
construct a backward shock wave for all $T_r\in(T_l,-T_l]$ (cf.
Fig.\ref{fig0}), while for $T_r>-T_l$, a single backward shock is
thermodynamically impossible since (\ref{n5}) cannot be satisfied.
Instead, there should appear an additional backward rarefaction
wave. That is to say, when $T_r>-T_l$, we have to establish a
two-wave solution composed of a rarefaction wave followed by a shock
wave. However, this kind of solution is not unique since the $T$ in
the middle state can lie arbitrarily in $(T_l,-T_l]$. With aiming to
overcome this difficulty, we apply the maximally dissipative
kinetics in \cite{knowles} at this solution, which requires that the
$T$ in the middle state should equal $T_2$ given by
\begin{equation}\label{38}\frac{\epsilon(T_2)-\epsilon(T_l)}{T_2-T_l}=\epsilon'(T_2) \;\;\text{for}\;\;
T_l<0\;\;
\text{and}\;\;\frac{\epsilon(T_3)-\epsilon(T_l)}{T_3-T_l}=\epsilon'(T_3)
\;\;\text{for}\;\; T_l>0,\end{equation} where  $T_2$ and $T_3$ are
uniquely determined due to the specific form of the strain-stress
relation (\ref{16}), see Fig.\ref{fig5}.
\begin{figure}[ht!]\begin{center}
\includegraphics[width=0.45\textwidth]{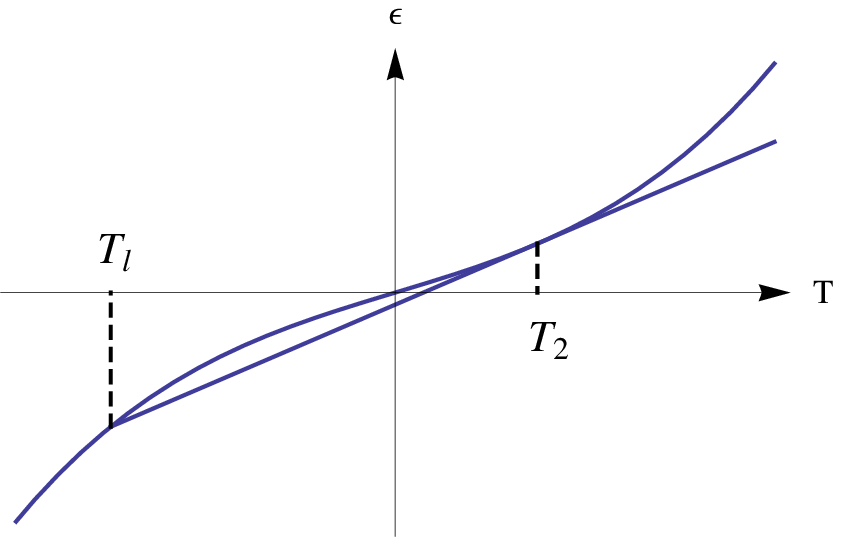}\hspace{2mm}\includegraphics[width=0.45\textwidth]{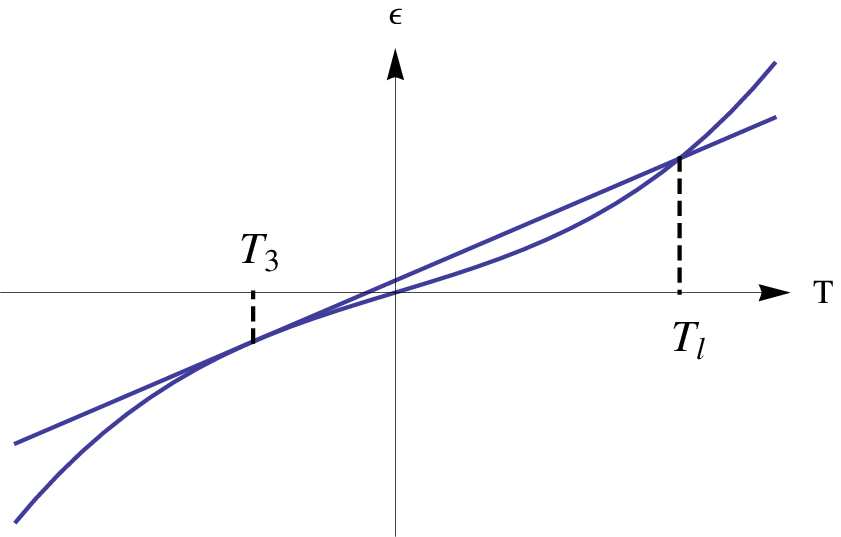}
 \renewcommand{\figurename}{Fig.} \caption{The strain-stress curve and a tangent ray.} \label{fig5}
\end{center}
\end{figure}
(\ref{38}) is the same as (\ref{n6}) in terms of the inverse
constitutive relation $\epsilon=\epsilon(T)$, which is the case for
the nonclassic elastic material (\ref{16}). Thus, by using the
maximally dissipative kinetics, we arrive at a unique two-wave
solution for $T_r>-T_l$. Actually, we have constructed such an
elementary wave curve including two parts, one shock wave curve from
$(T_l,v_l)$ to $(T_2,v_2)$ and the other rarefaction wave curve from
$(T_2,v_2)$ to $(T_r,v_r)$, see Fig.4.1. If we change to use a
stability criterion, such as Liu-entropy condition (\ref{n7}), we
can construct a backward shock wave only for $T_r\in(T_l,T_2]$ and
beyond $T_2$, there appears a unique two-wave solution, which
exactly coincides with the previous one. Interestingly, the shock
wave for $T_r\in(T_l,T_2]$ satisfies not only the Liu-entropy
condition, but also the Lax entropy inequality. When $T_r<T_l$, one
may obtain a forward shock wave according to the criterion
(\ref{n5}). Moreover, this shock satisfies the Lax entropy
inequality. The cases for $T_l>0$ and $T_l=0$ can be dealt with
similarly.  The above
 argument produces a reasonable observation that a physical discontinuity satisfied by the maximally dissipative kinetics must be stable.
This kind of relationship between Lax entropy inequality,
Liu-entropy condition and the maximally dissipative kinetics seems
not revealed in the literature.

\section{Elementary waves}By the method of wave curves, we next divide the discussions
into three cases: $T_l<0, T_l>0$ and $T_l=0$. For first two cases,
the phase plane is split into twelve disjoint regions and the
corresponding Riemann solutions are derived. It is worth pointing
out that there exist some composite wave solutions, which are
composed of a rarefaction wave and a degenerate shock wave.

Let
$U=\left(\hspace{-1mm}\begin{array}{c}T\\v\end{array}\hspace{-1mm}\right)$,
the system (\ref{14}) can be rewritten as
\begin{equation}\label{20}U_t+A(U)U_x=0,\end{equation}
where
\begin{equation*}A(U)=\left(\begin{array}{cc}0&-\frac{1}{\epsilon'(T)}\\-\frac{1}{\rho}&0
\end{array}\right).\end{equation*} Due to
the assumption (\ref{18}), it is easy to see that
\begin{equation}\label{21}\epsilon'(T)=\beta+\alpha\left(1+\frac{\gamma}{2}T^2\right)^{n-1}\left(1+\frac{1+2n}{2}\gamma T^2\right)>0.\end{equation}
By direct computation, the eigenvalues of $A(U)$ read
\begin{equation}\label{22}\lambda_1=-\frac{1}{\sqrt{\rho\epsilon'(T)}}<0<\lambda_2=\frac{1}{\sqrt{\rho\epsilon'(T)}}.\end{equation}
The right eigenvectors corresponding to $\lambda_{i} (i=1,2)$ can be
chosen as
\begin{equation}\label{23}r_1=\left(\hspace{-1mm}\begin{array}{c}-\rho\lambda_1\\1\end{array}\hspace{-1mm}\right),
\quad
r_2=\left(\hspace{-1mm}\begin{array}{c}-\rho\lambda_2\\1\end{array}\hspace{-1mm}\right),\end{equation}
respectively; while the left eigenvectors corresponding to
$\lambda_i (i=1,2)$ can be taken as
\begin{equation}\label{24}l_1=(1,-\rho\lambda_1),\quad l_2=(1,-\rho\lambda_2),\end{equation}
respectively.

 Summarizing the above argument leads to
\begin{proposition}Under the assumption (\ref{17})-(\ref{18}), the system (\ref{20}) is  strictly hyperbolic with two distinct eigenvalues
(see (\ref{22})), and the right (resp. left) eigenvectors can be
chosen as (\ref{23}) (resp. (\ref{24})).\end{proposition}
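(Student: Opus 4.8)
The proposition collects three facts, so the plan is to (i) record the reduction of (\ref{14}) to the quasilinear form (\ref{20}), (ii) prove the strict positivity $\epsilon'(T)>0$ claimed in (\ref{21}) --- which is the only nontrivial point --- and (iii) read off the spectrum and eigenvectors of $A(U)$. For (i), since the constitutive function (\ref{16}) is smooth (because $1+\tfrac{\gamma}{2}T^{2}\ge 1>0$ for all $T$) and, as shown below, strictly increasing, the chain rule gives $\epsilon_t=\epsilon'(T)\,T_t$; feeding this into the second equation of (\ref{14}) yields $T_t=\epsilon'(T)^{-1}v_x$, while the first equation is $v_t=\rho^{-1}T_x$, and in the variable $U=(T,v)^{T}$ these two relations are exactly $U_t+A(U)U_x=0$ with the stated $A(U)$. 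Differentiating (\ref{16}) and grouping terms gives
\begin{equation*}\epsilon'(T)=\beta+\alpha\left(1+\tfrac{\gamma}{2}T^{2}\right)^{n-1}\left(1+\tfrac{1+2n}{2}\gamma T^{2}\right).\end{equation*}

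For (ii), the task is to show this expression is positive for every $T\in\mathbb{R}$. Put $s=\tfrac{\gamma}{2}T^{2}\ge 0$; it then suffices to prove $g(s):=(1+s)^{n-1}\bigl(1+(1+2n)s\bigr)\ge 1$ on $[0,\infty)$, because then $\epsilon'(T)\ge\beta+\alpha$, which is positive by the hyperbolicity hypothesis (\ref{18}). This inequality follows from $g(0)=1$ together with the identity
\begin{equation*}g'(s)=(1+s)^{n-2}\,n\bigl(3+(1+2n)s\bigr)>0\qquad(s\ge 0),\end{equation*}
which holds because $\alpha>0$, $\gamma>0$, $n>0$ by (\ref{17}); hence $g$ is increasing on $[0,\infty)$ and never drops below its value at the origin. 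I expect this monotonicity check to be the single place needing a little care, since it must also cover the range $0<n<1$, where the factor $(1+\tfrac{\gamma}{2}T^{2})^{n-1}$ is itself less than $1$ and a naive term-by-term estimate fails.

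Given $\epsilon'(T)>0$, step (iii) is pure linear algebra. The characteristic polynomial of $A(U)$ is $\lambda^{2}-\frac{1}{\rho\,\epsilon'(T)}$, whose roots are the distinct reals $\lambda_{1,2}=\mp\bigl(\rho\,\epsilon'(T)\bigr)^{-1/2}$ of (\ref{22}), satisfying $\lambda_{1}<0<\lambda_{2}$; thus (\ref{20}) is strictly hyperbolic. Solving $(A(U)-\lambda_{i}I)r_{i}=0$ from its second row gives $r_{i}=(-\rho\lambda_{i},\,1)^{T}$, i.e.\ (\ref{23}), and solving $l_{i}(A(U)-\lambda_{i}I)=0$ from its first column gives $l_{i}=(1,\,-\rho\lambda_{i})$, i.e.\ (\ref{24}); in each case the remaining scalar equation is automatic from $\rho\lambda_{i}^{2}=\epsilon'(T)^{-1}$. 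This yields all the assertions of the proposition.
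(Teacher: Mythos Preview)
Your proof is correct and follows essentially the same approach as the paper: the authors simply record the computations (\ref{21})--(\ref{24}) and then summarize them as the proposition, asserting $\epsilon'(T)>0$ ``due to the assumption (\ref{18})'' without further argument. Your treatment of step~(ii) via $g(s)=(1+s)^{n-1}(1+(1+2n)s)\ge 1$ actually supplies the missing justification (and, as you note, handles the subtler range $0<n<1$), so your write-up is more complete than the paper's own; one small cosmetic point is that the positivity of $g'(s)$ needs only $n>0$, not the full list $\alpha,\gamma>0$ you cite.
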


\begin{proposition}Under the assumption (\ref{17})-(\ref{18}), the characteristic fields $\lambda_{i} \,(i=1,2)$ for (\ref{20})
 are not genuinely nonlinear in
the sense of Lax \cite{lax}.
\end{proposition}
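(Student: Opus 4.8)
The plan is to check the Lax genuine nonlinearity condition head on, i.e.\ to determine the sign of $\nabla_U\lambda_i(U)\cdot r_i(U)$ for $i=1,2$ and exhibit a point where it vanishes. The key structural fact is that both eigenvalues $\lambda_i$ in (\ref{22}) depend on $U=(T,v)^T$ only through the stress $T$, so $\nabla_U\lambda_i=(\partial_T\lambda_i,\,0)$. Pairing this with the right eigenvector $r_i=(-\rho\lambda_i,1)^T$ from (\ref{23}) gives
\begin{equation*}\nabla_U\lambda_i(U)\cdot r_i(U)=-\rho\,\lambda_i(T)\,\partial_T\lambda_i(T),\qquad i=1,2.\end{equation*}
From (\ref{22}) one has $\partial_T\lambda_i(T)=\mp\tfrac12\,\rho\,\epsilon''(T)\,(\rho\epsilon'(T))^{-3/2}$, with the sign depending on $i$, and by (\ref{21}) the quantity $\epsilon'(T)$ is strictly positive; therefore $\nabla_U\lambda_i\cdot r_i$ is a nowhere-vanishing multiple of $\epsilon''(T)$. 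So the whole question reduces to locating the zeros of $\epsilon''(T)$.

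Next I would compute $\epsilon''(T)$ from (\ref{16}) by differentiating the expression (\ref{21}). A short calculation, factoring out the common terms, yields
\begin{equation*}\epsilon''(T)=\alpha\,n\,\gamma\,T\left(1+\tfrac{\gamma}{2}T^2\right)^{n-2}\left(3+\tfrac{(1+2n)\gamma}{2}T^2\right),\end{equation*}
and under the assumptions (\ref{17}) every factor except $T$ itself is strictly positive. Hence $\epsilon''(T)$ has the sign of $T$ and vanishes if and only if $T=0$. (If one only wants a single point of failure rather than a full description, this computation can be bypassed: since $\epsilon(T)$ is an odd function of $T$ by (\ref{17}), $\epsilon''$ is odd, so $\epsilon''(0)=0$ is immediate.)

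Combining the two steps, $\nabla_U\lambda_i(U)\cdot r_i(U)$ vanishes precisely on the line $\{T=0\}$ in the phase plane and is nonzero off it, for $i=1$ and $i=2$. Consequently neither characteristic field is genuinely nonlinear in the sense of Lax (and, since the pairing does not vanish identically, neither is it linearly degenerate); this is the exact sense in which the strictly hyperbolic system (\ref{20}) fails to be convex, and it is what later forces the appearance of composite rarefaction--shock wave patterns. I do not expect a real obstacle in this argument: the only mildly technical point is the algebraic factorization of $\epsilon''(T)$, and even that is optional if one is content to exhibit the single degeneracy point $T=0$ via the oddness of $\epsilon$.
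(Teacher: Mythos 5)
Your proposal is correct and follows essentially the same route as the paper: it computes $\nabla\lambda_i\cdot r_i=-\rho\lambda_i\,\partial_T\lambda_i$, reduces it to a positive multiple of $\epsilon''(T)$ using $\epsilon'(T)>0$ from (\ref{21}), and identifies $T=0$ as the unique zero of $\epsilon''$ via the same factorization as the paper's (\ref{26}). The only cosmetic caveat is the ambiguous $\mp$ convention in $\partial_T\lambda_i$, which cancels in the product anyway; your oddness shortcut for $\epsilon''(0)=0$ is a nice optional simplification not used in the paper.
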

\begin{proof} It suffices to calculate the invariants
$\nabla\lambda_{i}\cdot r_{i}\,(i=1,2)$. By computation,
\begin{equation*}\label{25}\nabla\lambda_{i}\cdot r_{i}=\left(\frac{\partial\lambda_{i}}{\partial T},\frac{\partial\lambda_{i}}{\partial v}\right)
\cdot\left(-\rho\lambda_{i},1\right)=-\rho\lambda_{i}\frac{\partial\lambda_{i}}{\partial
T}=\frac{\epsilon''(T)}{2\left[\epsilon'(T)\right]^2},\end{equation*}
where
\begin{equation}\label{26}\epsilon''(T)=
\alpha n \gamma T\left(3+\frac{1+2n}{2}\gamma
T^2\right)\left(1+\frac{\gamma}{2}T^2\right)^{n-2}.\end{equation} So
the system (\ref{20}) is genuinely nonlinear in the sense of Lax if
$T\neq0$, however the genuinely nonlinearity is not valid when
$T=0$. Thus, the proof is completed.
\end{proof}

\begin{remark}By (\ref{21}) and (\ref{26}),  the strain-stress relation
 $\epsilon=\epsilon(T)$ is always increasing and has concave part and convex part on
$(-\infty,0]$ and  $(0,+\infty)$, respectively.
\end{remark}

Since the Riemann problem (\ref{14}) and (\ref{15}) are invariant
under stretching of coordinates: $(t,x)\rightarrow (c t, c x) \; (c$
is a constant), we seek the self-similar solution
$(T,v)(t,x)=(T,v)(\xi), \xi=x/t$. Then the Riemann problem
(\ref{14})-(\ref{15}) can be reduced into the following boundary
value problem
\begin{equation}\label{28}\left\{\begin{array}{lll}\rho\xi v_{\xi}+T_{\xi}=0,\\
v_{\xi}+\xi\epsilon'(T)T_{\xi}=0,\\
(T,v)(+\infty)=(T_{r},v_{r}),\\
(T,v)(-\infty)=(T_{l},v_{l}).\end{array}\right.\end{equation} We
know (\ref{28}) provides  either the constant state solution
$U=$Const, or the singular solution, i.e., the backward (or
1-)rarefaction wave and the forward (or 2-)rarefaction wave
corresponding to the eigenvalues $\lambda_1$ and $\lambda_2$,
respectively.

Moreover, the system (\ref{14}) admits discontinuous shock
solutions, which satisfy the Rankine-Hugoniot conditions at the
moving stress discontinuity located at $x=x(t)$
\begin{equation*}\label{19}s\rho[v]+[T]=0,\quad s[\epsilon]+[v]=0,\end{equation*}
 where  $[f]=f(t,x(t)+0)-f(t, x(t)-0)$ and $ s=dx(t)/dt$ is the speed of a shock wave. If
 we have $s=\lambda_i(U_l)$ or $s=\lambda_i(U_r)$, then the shock $x=x(t)$
 is called a degenerate shock. If   both these equalities  are
 fulfilled, then the discontinuity $x=x(t)$ is a so-called contact
 discontinuity.

Under the assumption that the strain-stress relation is convex, one
can completely describe the structure of shock waves and rarefaction
waves for the system of classical conservation laws (cf.
\cite{smoller}). However, for the nonconvex case and the system
considered here, the situation is much more complicated and there
appear multiple waves or composite waves, see the following
discussions.

Now we consider the elementary waves  for the Riemann problem
 (\ref{14}) and (\ref{15}). By definition here, an elementary wave is a single shock or a rarefaction wave, or a composite wave composed of
 more than one single shock or rarefaction wave. A single elementary wave
 can be a backward rarefaction wave, or a backward shock wave (denoted by $R_1$, $S_1$, respectively),
 or a forward rarefaction wave, or a forward shock wave (denoted by
 $R_2$, $S_2$, respectively). First, we note that
 \begin{equation}\label{30}\frac{\partial\lambda_1}{\partial T}=\frac{\epsilon''(T)}{2\epsilon'(T)\sqrt{\rho\epsilon'(T)}}\;\left\{
 \begin{array}{ll}\geq0,\;\;T\geq0,\\<0,\;\;T<0,\end{array}\right.
 \quad\frac{\partial\lambda_2}{\partial T}=-\frac{\epsilon''(T)}{2\epsilon'(T)\sqrt{\rho\epsilon'(T)}}\;\left\{
 \begin{array}{ll}<0,\;\;T>0,\\\geq0,\;\;T\leq0.\end{array}\right.\end{equation}
 By the given left
state $(T_l,v_l)$ in the Riemann data, we divide the discussions
into three cases.

 Case I \quad $T_l=0$.

The backward rarefaction wave is
\begin{equation}\label{31}R_1:\quad
v-v_l=\int_{0}^T\sqrt{\epsilon'(\tau)/\rho}\;d\tau,\;\; T>0 \;\;
\text{or}\;\; T<0,
 \end{equation}
where $T>0$ or $T<0$ is determined by the requirement of
$\lambda_1(U)>\lambda_1(U_l)$ for the rarefaction waves.

The forward shock wave is given by
\begin{equation}\label{32}S_2:\quad v-v_l=\left\{\begin{array}{ll}-\sqrt{T\epsilon(T)/\rho},&T>0,\\
\sqrt{T\epsilon(T)/\rho},&T<0.\end{array}\right.\end{equation}
 This
shock automatically satisfies (\ref{n5}) and
$\lambda_2(U_l)>s_2>\lambda_2(U_r),
s_2=1/\sqrt{\rho\,\epsilon(T)/T}.$

Case II \quad $T_l<0$.

First we consider the forward elementary waves. The forward
rarefaction wave $R_2$ can be constructed as
\begin{equation*}\label{34}R_2:\quad v-v_l=-\int_{T_l}^T\sqrt{\epsilon'(\tau)/\rho}\;d\tau,\quad T_l<T\leq0,\; v_1\leq v<v_l,\end{equation*}
where $v_1=v_l-\int_{T_l}^0\sqrt{\epsilon'(\tau)/\rho}\;d\tau$ and
$T_l<T\leq0$ is determined by the requirement of
$\lambda_2(U)>\lambda_2(U_l)$ for rarefaction waves. A simple
calculation shows that $\frac{dv}{dT}<0,$ and
$\frac{d^2v}{dT^2}\geq0 \;( T_l<T\leq0), $ where the sign of
equality holds if and only if $T=0$.

If $T>0$, then the forward rarefaction wave can not be continued
further since $\lambda_2$ is monotonically decreasing for $T>0$ (see
the second equation in (\ref{30})). In fact, the $R_2$ curve can be
continued by a forward degenerate shock curve
\begin{equation*}\label{35}S_2:\quad v=v_1-\sqrt{T\epsilon(T)/\rho}=v_l-\int_{T_l}^0\sqrt{\epsilon'(\tau)/\rho}\;d\tau-
\sqrt{T\epsilon(T)/\rho},\;\;T>0,\end{equation*} where this
degenerate shock wave satisfies the criterion (\ref{n5}) and
$\lambda_2(U_1)=s_2>\lambda_2(U_r),\; U_1=(0,v_1).$

Thus, for any state $U$ with $T>0$, we can connect the left state
$U_l$ and the right state $U$ by a forward rarefaction wave and a
degenerate shock wave.
 This kind of wave is often
called as {\it a composite wave} or {\it a multiple wave}, which
changes continuously through the rarefaction wave $R_2$ from
$(T_l,v_l)$ to $(0,v_1)$ and then jumps at the right side of $R_2$
from $(0,v_1)$ to $(T,v)$.



On the other hand, when $T<T_l$, since $\lambda_2$ is an increasing
function (see the second equation in (\ref{30})), we can not connect
the left state $U_l$ by a forward rarefaction wave $R_2$. Actually,
we can construct the following shock wave
\begin{equation}\label{37}S_2:\quad v-v_l=\sqrt{(T-T_l)[\epsilon(T)-\epsilon(T_l)]/\rho},\quad T<T_l,\end{equation}
which is the classical shock wave  satisfying (\ref{n5}) and
moreover $\lambda_2(U_l)>s_2>\lambda_2(U_r).$

Now we consider the backward elementary waves. As discussed in
Section 2, for any $T\in(T_l,T_2]$, it is possible to connect the
left state $U_l$ and a right state $U$ by a backward shock wave as
follows
\begin{equation*}\label{39}S_1:\quad v-v_l=\sqrt{(T-T_l)[\epsilon(T)-\epsilon(T_l)]},\quad T_l<T<T_2,\;\;v_l<v<v_2,\end{equation*}
where
$v_2=v_l+\sqrt{(T_2-T_l)[\epsilon(T_2)-\epsilon(T_l)]}=v_l+(T_2-T_l)\sqrt{\epsilon'(T_2)/\rho},$
in which we have made use of (\ref{38})$_1$. We find that
$\lambda_1(U_l)>s_1>\lambda_1(U_r).$ Especially, if $T=T_2$, then
this shock wave becomes a degenerate shock since
$\lambda_1(U_l)>s_1=\lambda_1(U_r).$

Furthermore, when the stress $T>T_2$, as discussed previously, we
have to continue the wave curve after $T=T_2$ by a backward
rarefaction wave:
\begin{equation*}\label{40}R_1:\quad v=v_2+\int_{T_2}^T\sqrt{\epsilon'(\tau)/\rho}\;d\tau=v_l+(T_2-T_l)\sqrt{\epsilon'(T_2)/\rho}
+\int_{T_2}^T\sqrt{\epsilon'(\tau)/\rho}\;d\tau,\;T>T_2.\end{equation*}
Thus, for any state $U=(T,v)$ with $T>T_2$, we can connect the left
state $U_l$ and the right state $U$ by a composite wave, which is
composed of an $S_1$
 wave and a $R_1$ wave.
 This
composite wave jumps at the left edge of the $R_1$ rarefaction wave
from $U_l$ to $(T_2,v_2)$ and then changes continuously through
$R_1$ from $(T_2,v_2)$ to $U$.

On the other hand, for $T<T_l$,  we can connect the left state $U_l$
and the right state $U$ by a backward rarefaction wave
\begin{equation*}\label{41}R_1:\quad v-v_l=\int_{T_l}^T\sqrt{\epsilon'(\tau)/\rho}\;d\tau,\quad T<T_l,\end{equation*}
where $T<T_l$ is determined by $\lambda_1(U)>\lambda_1(U_l)$ for a
rarefaction wave.

Case III \quad $T_l>0$.

We first consider the backward elementary waves. For $T>T_l$, we
obtain the following $R_1$ rarefaction wave
\begin{equation*}\label{42}R_1:\quad v-v_l=\int_{T_l}^T\sqrt{\epsilon'(\tau)/\rho}\;d\tau,\quad T>T_l>0,\end{equation*}
where $T>T_l>0$ is due to $\lambda_1(U)>\lambda_1(U_l)$ for
rarefaction waves.

As discussed in Section 2, for any $T\in[T_3,T_l)$, the physical
admissibility (\ref{n5}) holds and moreover
$\lambda_1(U_l)>s_1\geq\lambda_1(U)$, where $U$ is the right state.
So we have the backward shock wave
\begin{equation*}\label{44}S_1:\quad v-v_l=-\sqrt{(T-T_l)[\epsilon(T)-\epsilon(T_l)]/\rho},\quad T_3\leq T<T_l,\;v_3\leq v<v_l,\end{equation*}
where $v_3=v_l-\sqrt{(T_3-T_l)[\epsilon(T_3)-\epsilon(T_l)]/\rho}$.
Furthermore, if the stress $T$ is smaller than $T_3$,  as mentioned
in Section 2, the shock wave curve can not be continued and we have
to continue the solution by $R_1$. The $R_1$ rarefaction wave is
given by
\begin{equation*}\label{45}R_1:\quad v-v_3=\int_{T_3}^T\sqrt{\epsilon'(\tau)/\rho}\;d\tau,\quad T<T_3.\end{equation*}

Now we turn to discuss the forward elementary waves. The forward
shock wave $S_2$ is given by
\begin{equation}\label{46}S_2:\quad v-v_l=-\sqrt{(T-T_l)[\epsilon(T)-\epsilon(T_l)]/\rho},\quad T>T_l>0,\end{equation}
where $T>T_l>0$ implies the criterion (\ref{n5}) and
$\lambda_2(U_l)>s_2>\lambda_2(U)$.

For the case $0\leq T<T_l$, we can construct the following forward
rarefaction wave
\begin{equation*}\label{47}R_2:\quad v-v_l=-\int_{T_l}^T\sqrt{\epsilon'(\tau)/\rho}\;d\tau,\quad 0\leq T<T_l, v_l<v\leq v_4,\end{equation*}
where $v_4=v_l-\int_{T_l}^0\sqrt{\epsilon'(\tau)/\rho}\;d\tau$ and
the condition $0\leq T<T_l$ is derived by the requirement that
$\lambda_2(U)>\lambda_2(U_l)$ for rarefaction waves.

For $T<0$, we can not connect the left state $U_l$ and the right
state $U$ by the above $R_2$ wave. We resort to a forward shock wave
\begin{equation*}\label{48}S_2:\quad v=v_1+\sqrt{T\epsilon(T)/\rho}=v_l-\int_{T_l}^0\sqrt{\epsilon'(\tau)/\rho}\;d\tau+\sqrt
{T\epsilon(T)/\rho},\;T<0<T_l,\end{equation*} where $T<0<T_l$
implies the physical admissibility (\ref{n5}) and
$\lambda_2(U_l)>s_2>\lambda_2(U)$.

Thus, for any state $U$ with $T<0$, we can connect the states $U_l$
and  $U$ by a composite wave composed of a forward rarefaction wave
and a forward shock wave. This wave changes continuously through the
rarefaction wave $R_2$ from $U_l$ to the state $(0,v_4)$ and then
jumps at the right edge of $R_2$ from $(0,v_4)$ to $U$.

\section{Global solutions to the Riemann problem}
In this section, we construct the global Riemann solution to
(\ref{14})-(\ref{15}) according to the locations of $U_l$ and $U_r$
in the $(T,v)$ plane.

First, we place all of the wave curves $R_i=R_i(T;U_l),
S_i=S_i(T;U_l) (i=1,2)$ in the $(T,v)$ plane and find that their
distributions  vary according to $T_l<0, T_l=0$ and $T_l>0$, see
Fig. \ref{fig10}. Thus, in what follows, we divide the discussions
for the Riemann problem (\ref{14})-(\ref{15}) into three cases
$T_l<0$, $T_l>0$ and $T_l=0$.

\begin{figure}[ht!]\begin{center}
\includegraphics[width=0.32\textwidth]{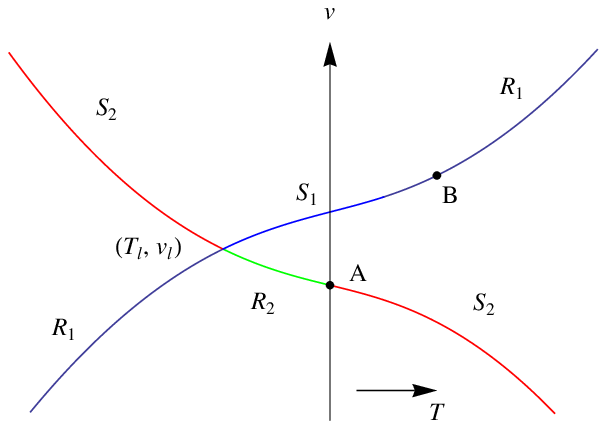}\hspace{1mm}\includegraphics[width=0.32\textwidth]{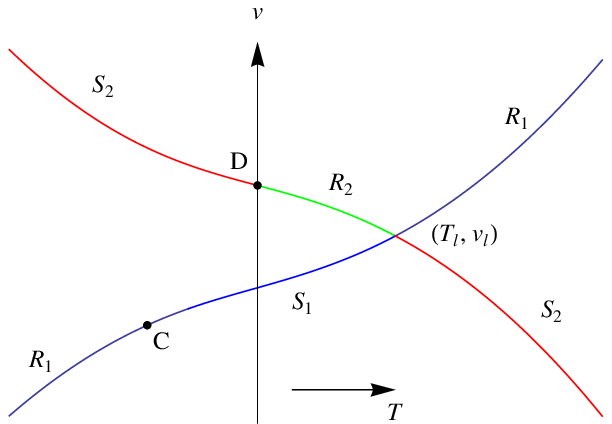}\hspace{1mm}
\includegraphics[width=0.32\textwidth]{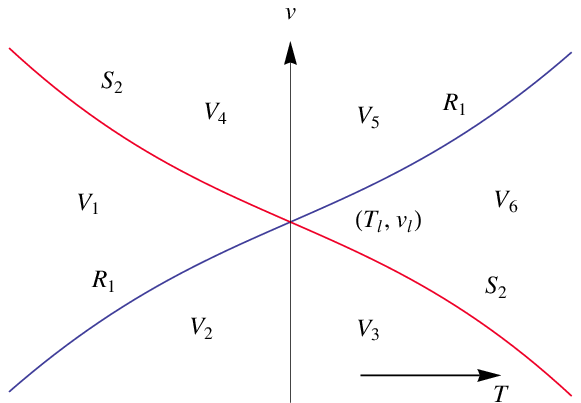}
  \renewcommand{\figurename}{Fig.} \caption{Wave curves for $T_l<0$ (left), $T_l>0$ (middle) and $T_l=0$ (right), where the coordinates of A and B are $(0,v_1)$
  and $(T_2,v_2)$, respectively,
 in which $v_1=v_l-\int_{T_l}^0\sqrt{\epsilon'(\tau)/\rho}\;d\tau$ and $v_2=v_l+(T_2-T_l)\sqrt{\epsilon'(T_2)/\rho}$, and
 the coordinates of C and D are $(T_3,v_3)$ and $(0,v_4)$, respectively,
  in which  $v_3=v_l-\sqrt{(T_3-T_l)[\epsilon(T_3)-\epsilon(T_l)]/\rho}$ and $v_4=v_l-\int_{T_l}^0\sqrt{\epsilon'(\tau)/\rho}\;d\tau$.} \label{fig10}
 \end{center}
\end{figure}

{\bf Case A} $\quad T_l<0$.

Referring to Fig. \ref{fig10}, it is well-known that the wave curves
$R_i(T;U_l)$ and $S_i(T;U_l)$ $ (i=1,2)$ have second-order contact
at the point $(T_l,v_l)$ (cf. \cite{smoller}). Moreover, we can
prove that $R_i(T;U_l)$ and $S_i(T;U_l)\, (i=1,2)$ are twice
continuously differentiable at the points A and B. As a matter of
fact, in the neighborhood of point B, the shock wave and the
rarefaction wave are given by
\begin{equation*}S_1(T;U_l): \quad
v=v_l+\sqrt{(T-T_l)[\epsilon(T)-\epsilon(T_l)]/\rho},\quad T_l<T\leq
T_2,
\end{equation*}
and \begin{equation*}R_1(T;U_l):\quad
v=v_l+(T_2-T_l)\sqrt{\epsilon'(T_2)/\rho}+\int_{T_2}^T\sqrt{\epsilon'(\tau)/\rho}\;d\tau,\quad
T>T_2,\end{equation*} respectively, where $T_2$ is determined by
(\ref{38}). First, it is easy to see that $$\lim_{T\rightarrow
T_2}\frac{\partial S_1(T;U_l)}{\partial T}=\lim_{T\rightarrow
T_2}\frac{\partial R_1(T;U_l)}{\partial
T}=\sqrt{\frac1\rho\,\epsilon'(T_2)},$$ where we have made use of
(\ref{38}). In addition, direct computation shows
$$\frac{\partial^2 R_1(T;U_l)}{\partial
T^2}=\frac{\epsilon''(T)}{2\sqrt{\rho\,\epsilon'(T)}},$$ and
$$\frac{\partial^2 S_1(T;U_l)}{\partial
T^2}=\frac{2\epsilon'(T)+(T-T_l)\epsilon''(T)}{2\sqrt{\rho\,(T-T_l)[\epsilon(T)-\epsilon(T_l)]}}-\frac{[\epsilon(T)-\epsilon(T_l)+(T-T_l)\epsilon'(T)]^2}
{4\rho^2[(T-T_l)(\epsilon(T)-\epsilon(T_l))/\rho]^{3/2}}.$$ So by
using (\ref{38}), we obtain $$\lim_{T\rightarrow
T_2}\frac{\partial^2 S_1(T;U_l)}{\partial T^2}=\lim_{T\rightarrow
T_2}\frac{\partial^2 R_1(T;U_l)}{\partial
T^2}=\frac{\epsilon''(T_2)}{2\sqrt{\rho\,\epsilon'(T_2)}}.$$ Thus,
the curves $R_1$ and $S_1$ have  second-order contact at the point
B. It is also true for the curves $R_2$ and $S_2$ at the point A.
This property for wave curves is still valid in other cases. The
proof is very similar and is omitted here.

 For convenience of discussion, we denote
\begin{equation*}\label{49}\mathcal{F}(U_l)=\left\{W_2(\tilde{U})\left|\tilde{U}\in W_1(U_l)\right.\right\},\end{equation*}
where $W_1(U)=R_1(T;U)\bigcup S_1(T;U)\bigcup R_1(T;U_B)$ is the
backward elementary wave curve issuing from $U$, in which $U_B$ is
the state at the point B, while $W_2(U)$ denotes the forward
elementary wave curve issuing from $U$. From Fig. \ref{fig10}, we
know that if the stress on the left state does not vanish, then
$W_2(U)=S_2(T;U)\bigcup R_2(T;U)\bigcup S_2(T;U)$; if the stress on
the left state equals zero, $W_2(U)=S_2(T;U)$, see Fig. \ref{fig13}.

\begin{figure}[ht!]\begin{center}
\includegraphics[width=0.65\textwidth]{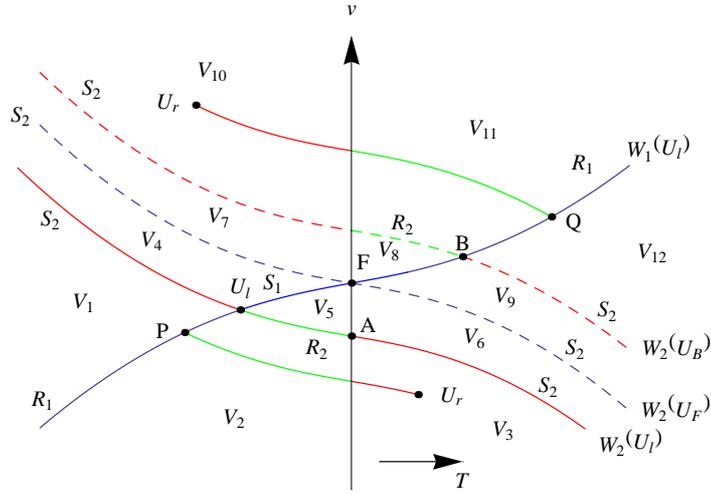} \renewcommand{\figurename}{Fig.}
 \caption{Wave curves for the case $T_l<0$, where the coordinates of points P and Q are denoted by $(\bar{T},\bar{v})$ and
 $(\hat{T},\hat{v})$, respectively and the coordinates of points B and  F are given by $(T_2,v_2)$ and $(0,v_l+\sqrt{T_l\,\epsilon(T_l)/\rho})$,
 respectively.}\label{fig13}
 \end{center}
\end{figure}

Summarizing the preceding discussions, we have
\begin{proposition}The  functions $W_i(U)\,(i=1,2)$ of the elementary wave curves
 are strictly monotone and twice continuously differentiable with respect to $T$. \end{proposition}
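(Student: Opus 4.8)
The plan is to exploit the piecewise structure of $W_i(U)$. By the constructions of Section 3, each curve $W_i(U)$, viewed in the $(T,v)$-plane, is the graph of a function $v=W_i(T)$ obtained by concatenating finitely many elementary arcs --- each a backward/forward rarefaction arc or a backward/forward (possibly degenerate) shock arc, parametrized by the stress $T$, with $T$ sweeping monotonically through $\mathbb{R}$. First I would establish the two asserted properties on each single arc, and then check $C^2$-matching at the finitely many junction points (the points $A$, $B$ for $T_l<0$; $C$, $D$ for $T_l>0$; the point $T=0$ for $T_l=0$; together with the base point $(T_l,v_l)$ in every case).

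On a rarefaction arc $v(T)=v_*\pm\int_{T_*}^{T}\sqrt{\epsilon'(\tau)/\rho}\,d\tau$ one has $v'(T)=\pm\sqrt{\epsilon'(T)/\rho}$, which by (\ref{21}) is of fixed sign and never zero, and since $\epsilon\in C^\infty$ with $\epsilon'>0$ the arc is $C^\infty$ with $v''(T)=\pm\epsilon''(T)/(2\sqrt{\rho\,\epsilon'(T)})$. On a shock arc $v(T)=v_*\pm\sqrt{(T-T_*)[\epsilon(T)-\epsilon(T_*)]/\rho}$, set $g(T)=(T-T_*)[\epsilon(T)-\epsilon(T_*)]/\rho$; then $g(T_*)=g'(T_*)=0$ and $g''(T_*)=2\epsilon'(T_*)/\rho>0$, so $g(T)=\frac{\epsilon'(T_*)}{\rho}(T-T_*)^2(1+h(T))$ with $h$ smooth and $h(T_*)=0$, whence on the single side of $T_*$ on which the arc lives $\sqrt{g(T)}=|T-T_*|\sqrt{\epsilon'(T_*)/\rho}\,\sqrt{1+h(T)}$ is $C^\infty$ up to and including the endpoint $T_*$. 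Strict monotonicity of the shock arc follows from $v'(T)=\pm[\epsilon(T)-\epsilon(T_*)+(T-T_*)\epsilon'(T)]/(2\sqrt{\rho(T-T_*)[\epsilon(T)-\epsilon(T_*)]})$: on the relevant stress interval the bracket in the numerator is nonzero of fixed sign (for $T>T_*$ it is positive since $\epsilon'>0$ and $\epsilon$ is increasing; for $T<T_*$ the analogous bookkeeping applies), so $v'$ keeps one sign and has a finite nonzero one-sided limit at $T_*$.

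For the matching there are two kinds of junctions. At the base point $(T_l,v_l)$ with $T_l\neq0$, genuine nonlinearity holds, so the rarefaction and shock arcs of the same family there have second-order contact by the classical argument (cf. \cite{smoller}); at junctions occurring at $T=0$ (or at the base point when $T_l=0$) one matches one-sided first and second derivatives by direct computation from the explicit formulas above together with $\epsilon''(0)=0$ from (\ref{26}), which forces the second derivatives on both sides to agree (in fact to vanish). The genuinely composite junctions are those at $T=T_2$ and $T=T_3$, where a shock arc issuing from $U_l$ meets a rarefaction arc issuing from the intermediate state. Here the key point is that $T_2$ (resp. $T_3$) is, by construction, exactly the stress singled out by the maximally dissipative kinetics (\ref{n6}), i.e. by (\ref{38}); this tangency relation is precisely what makes the one-sided derivatives coincide --- the computation displayed just before this proposition shows, for $B=(T_2,v_2)$, that $\lim_{T\to T_2}\partial_T S_1=\lim_{T\to T_2}\partial_T R_1=\sqrt{\epsilon'(T_2)/\rho}$ and $\lim_{T\to T_2}\partial_T^2 S_1=\lim_{T\to T_2}\partial_T^2 R_1=\epsilon''(T_2)/(2\sqrt{\rho\,\epsilon'(T_2)})$, with (\ref{38}) used to remove the would-be singular term in $\partial_T^2 S_1$; the point $C$ and the cases $T_l>0$, $T_l=0$ are identical after swapping the two families and the signs. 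Combining piecewise $C^\infty$-regularity, the piecewise-constant sign of $v'$, the nonvanishing of the common one-sided value of $v'$ across each junction, and the $C^2$-matching, one concludes that each $W_i(U)$ is globally $C^2$ and strictly monotone in $T$.

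I expect the main obstacle to be the $C^2$-matching at the composite junctions $T_2$, $T_3$: one must see that the apparent singularity of $\partial_T^2 S$ at the shock endpoint is removable exactly when the tangency condition (\ref{38}) holds, and that the resulting limit equals $\partial_T^2 R$ there. The remaining work --- tracking which branch of the square root is taken on each arc and on which side of the base stress it lies, across the three cases $T_l<0$, $T_l>0$, $T_l=0$ --- is routine but must be done carefully so as to obtain strict rather than merely weak monotonicity, the only admissible zero of $v''$ being at $T=0$, consistently with Remark 3.3.
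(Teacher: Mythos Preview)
Your approach is essentially the same as the paper's: the paper cites \cite{smoller} for second-order contact at the base point $(T_l,v_l)$, then carries out exactly the computation you describe at the composite junction $B=(T_2,v_2)$ --- verifying that the one-sided first and second derivatives of $S_1$ and $R_1$ agree there thanks to the tangency relation (\ref{38}) --- and declares the remaining junctions (point $A$, and the cases $T_l>0$, $T_l=0$) to be similar. Your write-up is more systematic (you treat regularity and monotonicity on each arc separately and classify the junctions, including the $T=0$ ones where $\epsilon''(0)=0$ does the work), but the core idea and the only nontrivial calculation coincide with the paper's; one small slip is your closing remark about ``the only admissible zero of $v''$ being at $T=0$'' --- it is $v'$ whose nonvanishing you need for strict monotonicity, not $v''$.
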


 We note that the wave curves vary dramatically
according  to the locations of $U_l$, see Fig. \ref{fig10}. For the
present case $T_l<0$, by the wave curves $W_1(U_l), W_2(U_l),
W_2(U_B)$, $W_2(U_F)$ and the line $T=0$, the phase plane $(T,v)$ is
divided into twelve disjoint regions V$_i$ $(i=1,2,\cdots, 12)$, see
Fig. \ref{fig13}.

 Let $U_l$ be fixed and allow $U_r$ to vary. As discussed
in Smoller \cite{smoller}, if $U_r$ lies on either $R_i$ or $S_i$
$(i=1,2)$, then the Riemann problem (\ref{14})-(\ref{15}) can be
solved as in the previous section. In order to obtain the general
solution to the Riemann problem (\ref{14})-(\ref{15}), we need to
prove that the twelve disjoint regions V$_i$ $(i=1,2,\cdots, 12)$
are covered univalently by the family of curves in
$\mathcal{F}(U_l)$. That is to say, through each point $U_r\in$
$\bigcup_{i=1}^{12}$V$_i$, there passes exactly one curve in
$\mathcal{F}(U_l)$.

Suppose $U_r\in$ V$_3$. Referring to Fig. \ref{fig13}, we have the
following equations
\begin{equation}\label{50}\quad \bar{v}=v_l+\int_{T_l}^{\bar{T}}\sqrt{\epsilon'(\tau)/\rho}\;d\tau,\end{equation}
and
\begin{equation}\label{51} v_r=\left\{\begin{array}{lll}\bar{v}-\int_{\bar{T}}^{T}\sqrt{\epsilon'(\tau)/\rho}\;d\tau,& \bar{T}<T\leq0,\vspace{2mm}\\
\bar{v}-\int_{\bar{T}}^0\sqrt{\epsilon'(\tau)/\rho}\;d\tau-\sqrt{T\epsilon(T)/\rho},&
0<T<T_r.
\end{array}\right.\end{equation}
In order to show the region V$_3$ is covered univalently by the
family of curves in $\mathcal{F}(U_l)$, it suffices to show that
$\partial T/\partial \bar{v}>0$. By (\ref{50})-(\ref{51}), we
compute
\begin{equation*}1=\sqrt{\frac{\epsilon'(\bar{T})}{\rho}}\frac{\partial\bar{T}}{\partial\bar{v}},\quad
0=\left\{\begin{array}{lll}1-\sqrt{\frac{\epsilon'(T)}{\rho}}\frac{\partial
T}
{\partial\bar{v}}+\sqrt{\frac{\epsilon'(\bar{T})}{\rho}}\frac{\partial
\bar{T}}{\partial \bar{v}},& \bar{T}<T\leq0,\vspace{2mm}\\
1+\sqrt{\frac{\epsilon'(\bar{T})}{\rho}}\,\frac{\partial\bar{T}}{\partial\bar{v}}-\frac{\epsilon(T)+T\epsilon'(T)}{2\sqrt{\rho
T\epsilon(T)}}\,\frac{\partial T}{\partial\bar{v}},&0<T<T_r.
\end{array}\right.\end{equation*}
Then, it follows that
\begin{equation*}\frac{\partial T}{\partial\bar{v}}=\left\{\begin{array}{lll}\frac{2}{\sqrt{\epsilon'(T)/\rho}}>0,& \bar{T}<T\leq0,\vspace{2mm}\\
\frac{4\sqrt{\rho
T\epsilon'(T)}}{\epsilon(T)+T\epsilon'(T)}>0,&0<T<T_r.
\end{array}\right.\end{equation*}
When $U_r$ lies in other  regions, the proof is very similar  and
the details are omitted.

Thus, the Riemann problem (\ref{14})-(\ref{15}) can be solved by
connecting $U_l$ and $\bar{U}$ by a backward (shock, or rarefaction,
or composite) wave, and then connecting $\bar{U}$ and $U_r$ by a
forward (shock, or rarefaction, or composite) wave.

Next, we list the Riemann solutions according to the locations of
$U_r$ case by case.

{\bf Case A1}\quad  If $U_r\in$ V$_1$, then the Riemann solution is
$U_l\overset{R_1}{\longrightarrow}\bar{U}\overset{S_2}{\longrightarrow}U_r,$
where  $\bar{U}=(\bar{T},\bar{v})$ is the intermediate state. The
above formula means that the state $\bar{U}$ can be connected to
$U_l$ on the right by a backward rarefaction wave and $U_r$ is
connected to $\bar{U}$ on the right by a forward shock. The symbols
below have similar meanings and we shall not explain them again
unless it is necessary.

{\bf Case A2}\quad  If $U_r\in$ V$_2$,  then the Riemann solution is
$U_l\overset{R_1}{\longrightarrow}\bar{U}\overset{R_2}{\longrightarrow}U_r.$

{\bf Case A3}\quad  If $U_r\in$ V$_3$,  then the Riemann solution is
$U_l\overset{R_1}{\longrightarrow}\bar{U}\overset{R_2}{\longrightarrow}(0,v_*)\xrightarrow[s_2=\lambda_2(0)]{S_2}U_r,$
where
$v_*=\bar{v}-\int_{\bar{T}}^0\sqrt{\epsilon'(\tau)/\rho}\;d\tau$.

{\bf Case A4}\quad   If $U_r\in$ V$_4$,  then the Riemann solution
is
$U_l\overset{S_1}{\longrightarrow}\bar{U}\overset{S_2}{\longrightarrow}U_r.$

{\bf Case A5}\quad  If $U_r\in$ V$_5$,  then the Riemann solution is
$U_l\overset{S_1}{\longrightarrow}\bar{U}\overset{R_2}{\longrightarrow}U_r.$

{\bf Case A6}\quad  If $U_r\in$ V$_6$,  then the Riemann solution is
$U_l\overset{S_1}{\longrightarrow}\bar{U}\overset{R_2}{\longrightarrow}(0,v_*)\xrightarrow[s_2=\lambda_2(0)]{S_2}U_r.$

{\bf Case A7}\quad  If $U_r\in$ V$_7$,  then the Riemann solution is
$U_l\overset{S_1}{\longrightarrow}\bar{U}\overset{R_2}{\longrightarrow}(0,v_*)\xrightarrow[s_2=\lambda_2(0)]{S_2}U_r.$
The structure of this solution is similar to that in Case A6.

{\bf Case A8}\quad  If $U_r\in$ V$_8$,  then the Riemann solution is
$U_l\overset{S_1}{\longrightarrow}\bar{U}\overset{R_2}{\longrightarrow}U_r,$
which has a similar structure to that in Case A5.

{\bf Case A9}\quad   If $U_r\in$ V$_9$,  then the Riemann solution
is
$U_l\overset{S_1}{\longrightarrow}\bar{U}\overset{S_2}{\longrightarrow}U_r,$
which is similar to Case A4.

{\bf Case A10}\quad   If $U_r\in$ V$_{10}$,  then the Riemann
solution is
$$U_l\xrightarrow[s_1=\lambda_1(T_2)]{S_1}U_B\overset{R_1}{\longrightarrow}\bar{U}\overset{R_2}{\longrightarrow}(0,v_*)
\xrightarrow[s_2=\lambda_2(0)]{S_2}U_r,$$ where
$U_B=(T_2,v_2)=(T_2,v_l+(T_2-T_l)\sqrt{\epsilon'(T_2)/\rho})$, in
which $T_2$ is given by (\ref{38}).



{\bf Case A11}\quad   If $U_r\in$ V$_{11}$,  then the Riemann
solution is
$U_l\xrightarrow[s_1=\lambda_1(T_2)]{S_1}U_B\overset{R_1}{\longrightarrow}\bar{U}\overset{R_2}{\longrightarrow}U_r.$

{\bf Case A12}\quad   If $U_r\in$ V$_{12}$,  then the Riemann
solution is
$U_l\xrightarrow[s_1=\lambda_1(T_2)]{S_1}U_B\overset{R_1}{\longrightarrow}\bar{U}\overset{S_2}{\longrightarrow}U_r.$

So we have solved the Riemann problem (\ref{14})-(\ref{15}) for the
case $T_l<0$.

{\bf Case B} $\quad T_l>0$.

 For this case, we are able to construct the
solutions to the Riemann problem (\ref{14})-(\ref{15}) by a very
similar method to that in Case A. We first draw all the wave curves
in the $(T,v)$ phase plane and also split the entire plane into
twelve disjoint regions V$_i (i=1,2,\cdots,12)$ by the wave curves
$W_1(U_l), W_2(U_l), W_2(U_E), W_2(U_C)$ and the line $T=0$, see
Fig. \ref{fig16}.

\begin{figure}[ht!]\begin{center}
\includegraphics[width=0.65\textwidth]{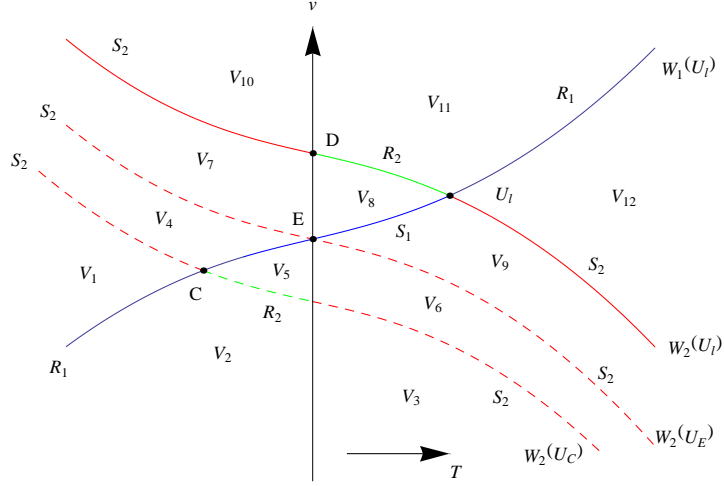} \renewcommand{\figurename}{Fig.}
 \caption{Riemann solutions for the case $T_l>0$, where the coordinates for C, D and E are $(T_3,v_3), (0,v_4)$ and
 $(0,v_l-\sqrt{T_l\,\epsilon(T_l)/\rho}\,)$, respectively.}\label{fig16}
 \end{center}
\end{figure}

As before, we can prove that each region V$_i$ is covered
univalently by the family of curves $\mathcal{F}(U_l)$. So if $U_r$
lies in each wave curves, then the Riemann solution can be derived
easily as in the previous section; while, if $U_r$ lies in one of
regions V$_i (i=1,2,\cdots,12)$, we can construct the corresponding
Riemann solution as follows.

{\bf Case B1}\quad  If  $U_r\in$ V$_1$,  then the Riemann solution
is
$U_l\xrightarrow[s_1=\lambda_1(T_3)]{S_1}U_C\overset{R_1}{\longrightarrow}\bar{U}\overset{S_2}{\longrightarrow}U_r,$
where
$U_C=(T_3,v_3)=(T_3,v_l-\sqrt{(T_3-T_l)[\epsilon(T_3)-\epsilon(T_l)]/\rho}\,)$,
in which $T_3$ is determined by (\ref{38}).

{\bf Case B2}\quad  If  $U_r\in$ V$_2$, then the Riemann solution is
$U_l\xrightarrow[s_1=\lambda_1(T_3)]{S_1}U_C\overset{R_1}{\longrightarrow}\bar{U}\overset{R_2}{\longrightarrow}U_r.$

{\bf Case B3}\quad  If  $U_r\in$ V$_3$, then the Riemann solution is
$U_l\xrightarrow[s_1=\lambda_1(T_3)]{S_1}U_C\overset{R_1}{\longrightarrow}\bar{U}\overset{R_2}{\longrightarrow}(0,v_*)
\xrightarrow[s_2=\lambda_2(0)]{S_2}U_r,$ where
$v_*=\bar{v}-\int_{\bar{T}}^0\sqrt{\epsilon'(\tau)/\rho}\;d\tau$.

{\bf Case B4}\quad  If  $U_r\in$ V$_4$, then the Riemann solution is
$U_l\overset{S_1}{\longrightarrow}\bar{U}\overset{S_2}{\longrightarrow}U_r.$

{\bf Case B5}\quad  If  $U_r\in$ V$_5$, then the Riemann solution is
$U_l\overset{S_1}{\longrightarrow}\bar{U}\overset{R_2}{\longrightarrow}U_r.$

{\bf Case B6}\quad  If  $U_r\in$ V$_6$, then the Riemann solution is
$U_l\overset{S_1}{\longrightarrow}\bar{U}\overset{R_2}{\longrightarrow}(0,v_*)\xrightarrow[s_2=\lambda_2(0)]{S_2}U_r.$

{\bf Case B7}\quad  If  $U_r\in$ V$_7$, then the Riemann solution is
$U_l\overset{S_1}{\longrightarrow}\bar{U}\overset{R_2}{\longrightarrow}(0,v_*)\xrightarrow[s_2=\lambda_2(0)]{S_2}U_r.$

{\bf Case B8}\quad  If  $U_r\in$ V$_8$, then the Riemann solution is
$U_l\overset{S_1}{\longrightarrow}\bar{U}\overset{R_2}{\longrightarrow}U_r.$

{\bf Case B9}\quad  If  $U_r\in$ V$_9$, then the Riemann solution is
$U_l\overset{S_1}{\longrightarrow}\bar{U}\overset{S_2}{\longrightarrow}U_r.$

{\bf Case B10}\quad  If  $U_r\in$ V$_{10}$,  then the Riemann
solution is
$U_l\overset{R_1}{\longrightarrow}\bar{U}\overset{R_2}{\longrightarrow}(0,v_*)\xrightarrow[s_2=\lambda_2(0)]{S_2}U_r.$

{\bf Case B11}\quad  If $U_r\in$ V$_{11}$, then the Riemann solution
is
$U_l\overset{R_1}{\longrightarrow}\bar{U}\overset{R_2}{\longrightarrow}U_r.$

{\bf Case B12}\quad  If  $U_r\in$ V$_{12}$, then the Riemann
solution is
$U_l\overset{R_1}{\longrightarrow}\bar{U}\overset{S_2}{\longrightarrow}U_r.$

So we have finished the construction of Riemann solutions for the
Case B.

{\bf Case C} $\quad T_l=0$.

In this case, we place the wave curves $R_1(T;U_l)$ and $S_2(T;U_l)$
in the $(T,v)$ plane, where $R_1(T;U_l)$ and $S_2(T;U_l)$ are given
by (\ref{31}) and (\ref{32}), respectively. It is seen that the
$(T,v)$ plane is divided into six disjoint regions V$_i\,
(i=1,2,\cdots,6)$, see Fig. \ref{fig10}. Similarly, one can verify
that each region V$_i$ can be covered univalently by the family of
curves in $\mathcal{F}(U_l)$. So the Riemann problem
(\ref{14})-(\ref{15}) can be solved as in the following forms.

{\bf Case C1}\quad  If  $U_r\in$ V$_1$, then the Riemann solution is
$U_l\overset{R_1}{\longrightarrow}\bar{U}\overset{S_2}{\longrightarrow}U_r$.

{\bf Case C2}\quad  If  $U_r\in$ V$_2$, then the Riemann solution is
$U_l\overset{R_1}{\longrightarrow}\bar{U}\overset{R_2}{\longrightarrow}U_r$.

{\bf Case C3}\quad  If  $U_r\in$ V$_3$, then the Riemann solution is
$U_l\overset{R_1}{\longrightarrow}\bar{U}\overset{R_2}{\longrightarrow}(0,v_*)\xrightarrow[s_2=\lambda_2(0)]{S_2}U_r$.

{\bf Case C4}\quad  If  $U_r\in$ V$_4$, then the Riemann solution is
$U_l\overset{R_1}{\longrightarrow}\bar{U}\overset{R_2}{\longrightarrow}(0,v_*)\xrightarrow[s_2=\lambda_2(0)]{S_2}U_r$.

{\bf Case C5}\quad  If  $U_r\in$ V$_5$, then the Riemann solution is
$U_l\overset{R_1}{\longrightarrow}\bar{U}\overset{R_2}{\longrightarrow}U_r$.

{\bf Case C6}\quad  If  $U_r\in$ V$_6$, then the Riemann solution is
$U_l\overset{R_1}{\longrightarrow}\bar{U}\overset{S_2}{\longrightarrow}U_r$.

So we have obtained the Riemann solutions completely for the Case C.

Thus, we have obtained the globally unique piecewise smooth
solutions to the Riemnn problem (\ref{14})-(\ref{15}). Summarizing
the above discussions, we have
\begin{theorem}There exists a unique piecewise smooth solution to the Riemnn problem (\ref{14})-(\ref{15}) with any given
initial Riemann data. These solutions are composed of constant
states, backward (forward) rarefaction wave, backward (forward)
shock wave and composite wave, which is a combination of rarefaction
wave and shock wave.
\end{theorem}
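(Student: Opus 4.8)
The plan is to obtain the theorem by assembling the elementary waves of Section~3 along the composite wave curves of Section~4 and by establishing that these curves cover the $(T,v)$--plane univalently. First I would fix the left state $U_l$ and separate, as in Section~3, the three cases $T_l<0$, $T_l>0$, $T_l=0$; in each the full collection of backward and forward wave curves $R_i(T;U_l)$, $S_i(T;U_l)$ ($i=1,2$), together with their continuations through the corner states $A,B$ (resp.\ $C,D,E,F$), is already in hand. Because (\ref{14})--(\ref{15}) is self--similar, any solution must be a finite string of constant states separated by elementary waves whose speeds are non--decreasing in $x$; by Proposition~3.1 the two characteristic fields are strictly separated, $\lambda_1<0<\lambda_2$, so such a string necessarily splits into a backward fan issuing from $U_l$, a single constant intermediate state $\bar U=(\bar T,\bar v)$, and a forward fan issuing from $\bar U$. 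By Proposition~3.3 the composite curves $W_1(U)$ and $W_2(U)$ are strictly monotone and twice continuously differentiable in $T$, which makes this backward/forward decomposition consistent even across the corner states.

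The core step is to prove that the family $\mathcal{F}(U_l)=\{W_2(\tilde U)\mid\tilde U\in W_1(U_l)\}$ covers each of the twelve regions $\mathrm{V}_i$ (six, when $T_l=0$) univalently, i.e.\ exactly one curve of $\mathcal{F}(U_l)$ passes through each $U_r$. Parametrizing $W_1(U_l)$ by the stress $\bar T$ of the intermediate state and writing $T=T(\bar v)$ for the stress of the point $U_r$ reached along $W_2(\bar U)$, I would differentiate the defining integral and algebraic relations of $W_1$ and $W_2$ — exactly as carried out above for the region $\mathrm{V}_3$ — to obtain on each arc an explicit \emph{positive} expression for $\partial T/\partial\bar v$; for instance $\partial T/\partial\bar v=2/\sqrt{\epsilon'(T)/\rho}$ on a forward--rarefaction arc and $\partial T/\partial\bar v=4\sqrt{\rho\,T\epsilon'(T)}/(\epsilon(T)+T\epsilon'(T))$ on a forward degenerate--shock arc, both positive by (\ref{21}) and (\ref{17}). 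Since the relevant integrals diverge and the algebraic terms are unbounded, the endpoints of these monotone maps sweep out precisely the extent of each region $\mathrm{V}_i$ depicted in Figs.~\ref{fig13} and \ref{fig16}; hence $\bar v\mapsto U_r$ is a bijection of the appropriate sub--arc of $W_1(U_l)$ onto $\mathrm{V}_i$. This yields both existence and uniqueness of the intermediate state $\bar U$, and therefore of the whole Riemann solution, when $U_r$ lies in the interior of a region; when $U_r$ lies on one of the wave curves the solution is the single or composite elementary wave already built in Section~3, so altogether the $\mathrm{V}_i$ together with the wave curves exhaust the plane.

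It remains to verify that the assembled object is a genuine piecewise smooth self--similar solution. The backward fan occupies speeds $\le\lambda_1(\bar U)<0$ and the forward fan speeds $\ge\lambda_2(\bar U)>0$, so the two fans lie in disjoint cones of the $(x,t)$--plane separated by the constant state $\bar U$; within a composite wave the speeds are non--decreasing because a degenerate shock abuts the trailing edge of its adjoining rarefaction at equal speed (e.g.\ $s_2=\lambda_2(0)$ at the right edge of an $R_2$ reaching $T=0$), so no overlap or gap arises. Each constituent — constant state, rarefaction wave smooth in $\xi=x/t$, shock jump — is smooth, hence the solution is piecewise smooth. Uniqueness \emph{within the class of physically admissible solutions} follows since the dissipation inequality (\ref{n5}) together with the maximally dissipative kinetics (\ref{38}) forces every stress discontinuity to be exactly the (classical or degenerate) shock used in the construction (cf.\ Section~2), while the rarefaction waves are uniquely determined by (\ref{28}); combined with the univalent covering this pins down the solution uniquely.

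The step I expect to be the main obstacle is carrying the univalent--covering argument uniformly over all twelve regions in Cases~A and B. Some regions require following $W_1(U_l)$ through two successive arcs (a shock followed by a rarefaction, or vice versa) before switching to $W_2$, and $W_2$ itself may consist of up to three arcs; one must therefore set up the correct parametrization region by region, check $\partial T/\partial\bar v>0$ on every arc, and confirm that the resulting ranges match the region boundaries — in particular that $\partial T/\partial\bar v$ remains positive and the curve stays $C^1$ as it passes through a corner state, which is exactly where the second--order contact of Proposition~3.3 enters. The individual computations are of the same routine type as the $\mathrm{V}_3$ case, so the real work is the bookkeeping rather than any single estimate.
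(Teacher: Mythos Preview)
Your proposal is correct and follows essentially the same route as the paper: fix $U_l$, split into the three cases $T_l<0$, $T_l>0$, $T_l=0$, use the elementary and composite wave curves of Section~3, partition the $(T,v)$--plane into the regions $\mathrm{V}_i$, and establish univalent covering by the family $\mathcal{F}(U_l)$ via the monotonicity $\partial T/\partial\bar v>0$ (the paper also works out only $\mathrm{V}_3$ explicitly and declares the remaining regions analogous). The only slip is a numbering one: the $C^2$ monotonicity result you cite as ``Proposition~3.3'' is stated in Section~4 (it is Proposition~4.1 in the paper's scheme).
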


\section{A physically realizable case}
In this section, we consider a case which can be realized in an
experimental setting. Consider an infinitely-long circular elastic
rod which is bonded by a thin rigid ring around its middle section
(say, at $x=0$). Axial forces are applied to generate a stress $T_l$
for the part of $x<0$ and a stress $T_r$ for the part of $x>0$.
Then, the rigid ring is released, and stress waves will be
generated. Mathematically, this corresponds to the Riemann problem
(\ref{14})-(\ref{15}) with the velocities $v_l=v_r=0$. Although it
is a special case contained in the general results given the
previous section, we provide more details for the Riemann solutions
due to the physical relevance.

 First we make some preliminary observations. When $T_l<0$, by referring to Fig. \ref{fig13}, we denote the horizontal coordinates
 of the intersections for curves $W_2(U_F), W_2(U_B)$ and the axes $v=0$ by $T_*$ and $T_{**}$,
 respectively. By Eq. (\ref{32}) and Eq. (\ref{46}) and the formulas for the states at points F and B,  we find that $T_*$
 and $T_{**}$ are determined by
 \begin{equation*}\label{52}\sqrt{T_l\,\epsilon(T_l)/\rho}=\sqrt{T_*\,\epsilon(T_*)/\rho},\end{equation*}
and
\begin{equation*}\label{53}(T_2-T_l)\sqrt{\epsilon'(T_2)/\rho}=\sqrt{(T_{**}-T_2)[\epsilon(T_{**})-\epsilon(T_2)]/\rho},\end{equation*}
respectively. Moreover, we have $T_{**}>T_*>0$ and $T_*=-T_l$ by
(\ref{16}).

When $T_l>0$, we represent the horizontal coordinates of the
intersections for curves $W_2(U_E), W_2(U_C)$ and the axes $v=0$ by
$\hat{T}_*$ and $\hat{T}_{**}$, respectively. Similarly, by Eqs.
(\ref{32}) and (\ref{37}), we find that $\hat{T}_*$ and
$\hat{T}_{**}$ satisfy
$\sqrt{T_l\,\epsilon(T_l)/\rho}=\sqrt{\hat{T}_*\epsilon(\hat{T}_*)/\rho}$
and\begin{equation*}\label{55}\sqrt{(T_3-T_l)[\epsilon(T_3)-\epsilon(T_l)]/\rho}=\sqrt{(\hat{T}_{**}-T_3)[\epsilon(\hat{T}_{**})-\epsilon(T_3)]/\rho}
,\end{equation*} respectively. Furthermore, we  have
$\hat{T}_{**}<\hat{T}_*<0$ and $\hat{T}_*=-T_l$ by (\ref{16}).

For the convenience of comparison with the linear case, we solve the
the Riemann problem (\ref{14})-(\ref{15}) with the linearized
strain-stress function being $\epsilon=(\alpha+\beta)T$. By the
corresponding Riemann invariants and the method of characteristics,
it is straightforward to obtain the following solution
\begin{equation*}\label{56}(T,v)=\left\{\begin{array}{lll}(T_l,v_l)&x<\lambda_1t,\\
\left(\frac12\left[T_r+T_l+\sqrt{\frac{\rho}{\alpha+\beta}}(v_r-v_l)\right],\frac12\left[v_r+v_l+\sqrt{\frac{\alpha+\beta}{\rho}}(T_r-T_l)
\right]\right)&\lambda_1t<x<\lambda_2t,\\
(T_r,v_r)&x>\lambda_2t,\end{array}\right.\end{equation*} where
$\lambda_1=-\frac{1}{\sqrt{\rho(\alpha+\beta)}}$  and
$\lambda_2=\frac{1}{\sqrt{\rho(\alpha+\beta)}}.$
 Especially,
when $v_l=v_r=0$, the above solution reduces into
\begin{equation}\label{57}(T,v)=\left\{\begin{array}{lll}(T_l,0)&x<\lambda_1t=-\frac{1}{\sqrt{\rho(\alpha+\beta)}}\;t,\\
\left(\frac12\left(T_r+T_l\right),\frac12\sqrt{\frac{\alpha+\beta}{\rho}}(T_r-T_l)
\right)&\lambda_1t<x<\lambda_2t=\frac{1}{\sqrt{\rho(\alpha+\beta)}}\;t,\\
(T_r,0)&\frac{1}{\sqrt{\rho(\alpha+\beta)}}\;t=\lambda_2t<x,\end{array}\right.\end{equation}
which will be useful in the following discussions.

Now we are ready to explore the Riemann solutions in detail for the
physical case $v_l=v_r=0$. The discussions are divided into the
twelve cases as follows. The stress profiles corresponding to the
different locations of the Riemann initial data are depicted in
detail in Figs. \ref{fig17} and \ref{fig18}.

\begin{figure}[th!]\begin{center}
\includegraphics[width=0.4\textwidth]{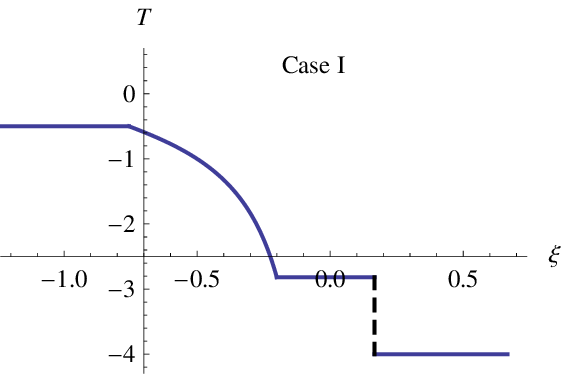}\hspace{2mm}\includegraphics[width=0.4\textwidth]{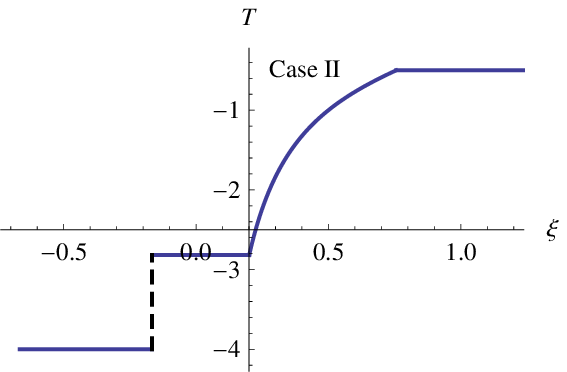}
\includegraphics[width=0.4\textwidth]{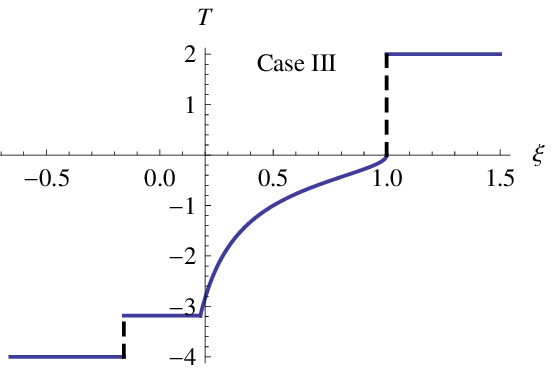}\hspace{2mm}\includegraphics[width=0.4\textwidth]{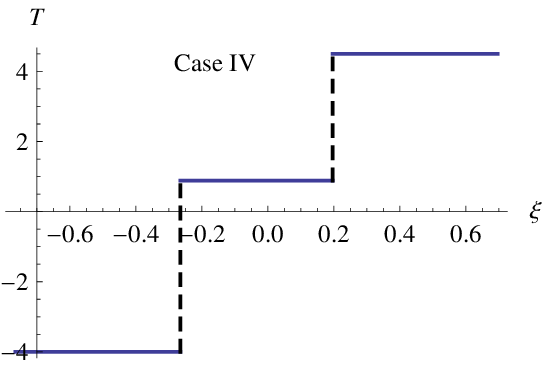}
\includegraphics[width=0.4\textwidth]{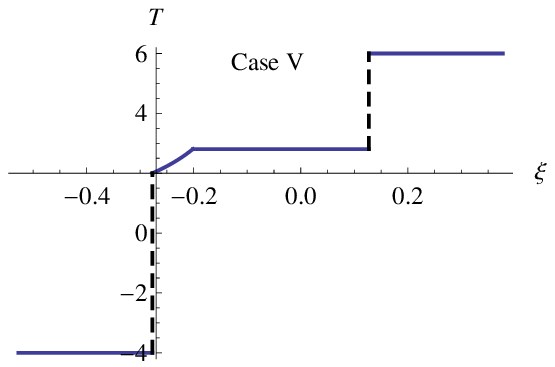}\hspace{2mm}\includegraphics[width=0.4\textwidth]{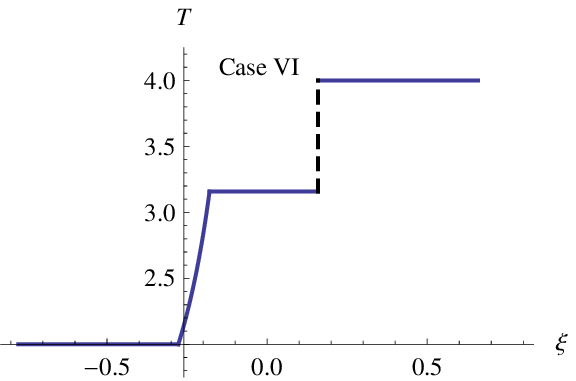}
  \renewcommand{\figurename}{Fig.} \caption{Cauchy stress profiles for different locations of Riemann initial data $U_l$.}\label{fig17}
 \end{center}
\end{figure}

{\bf Case I} \quad  If $T_r<T_l<0$, the Riemann solution is
$U_l\overset{R_1}{\longrightarrow}\bar{U}\overset{S_2}{\longrightarrow}U_r.$
More precisely, the solution formula is given by
\begin{equation*}\label{58}(T,v)=\left\{\begin{array}{lll}(T_l,0)&x<\xi_1t,\\
(\hat{T}(\xi),\hat{v}(\xi))&\xi_1 t\leq x\leq
\xi_2t,\\(\bar{T},\bar{v})&\xi_2t<x<s_2t,\\(T_r,0)&s_2t<x,\end{array}\right.\end{equation*}
where $\xi_1=-1/\sqrt{\rho\epsilon'(T_l)},
\xi_2=-1/\sqrt{\rho\epsilon'(\bar{T})}$ and
$(\hat{T}(\xi),\hat{v}(\xi))$ is determined by
\begin{equation}\label{59}\xi=-\frac{1}{\sqrt{\rho\epsilon'(\hat{T}(\xi))}},\quad \hat{v}(\xi)=\int_{T_l}^{\hat{T}}\sqrt{\frac{\epsilon'(\tau)}{\rho}}
\,d\tau.\end{equation} In addition, by the argument in Section 2, we
calculate the intermediate state $(\bar{T},\bar{v})$ via
\begin{equation}\label{60}\bar{v}=\int_{T_l}^{\bar{T}}\sqrt{\frac{\epsilon'(\tau)}{\rho}}\,d\tau,\quad \bar{v}+\sqrt{(\bar{T}-T_r)[\epsilon(\bar{T})-
\epsilon(T_r)]/\rho}=0,\end{equation} and the speed of forward shock
is
\begin{equation}\label{61}s_2=\frac{1}{\sqrt{\rho\frac{\epsilon(T_r)-\epsilon(\bar{T})}{T_r-\bar{T}}}}.\end{equation}

\begin{figure}[th!]\begin{center}
\includegraphics[width=0.4\textwidth]{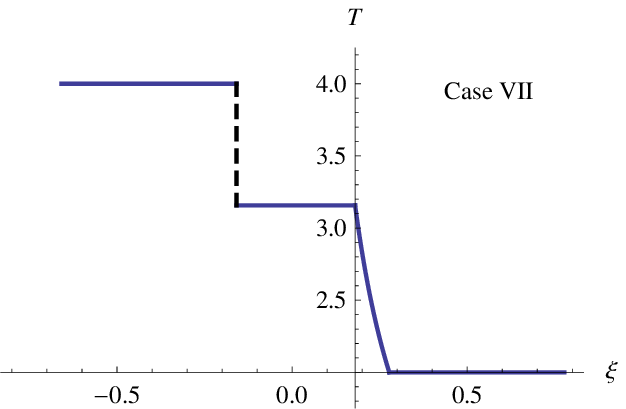}\hspace{2mm}\includegraphics[width=0.4\textwidth]{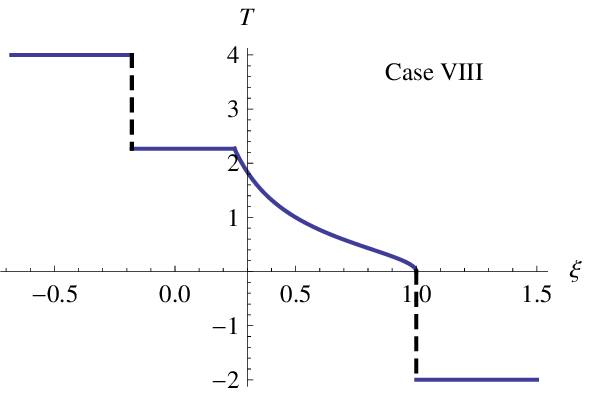}
\includegraphics[width=0.4\textwidth]{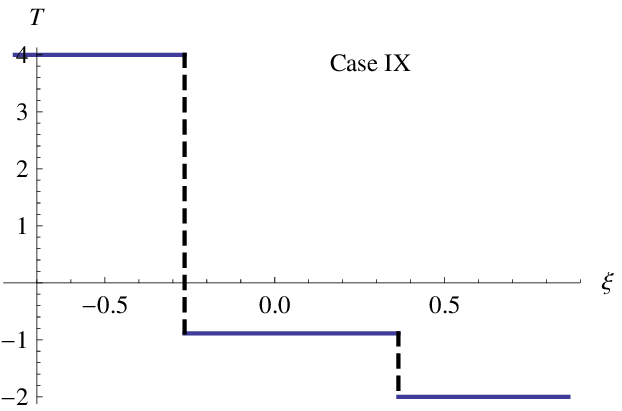}\hspace{2mm}\includegraphics[width=0.4\textwidth]{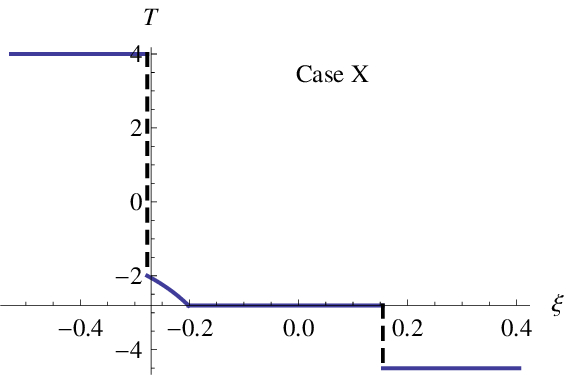}
\includegraphics[width=0.4\textwidth]{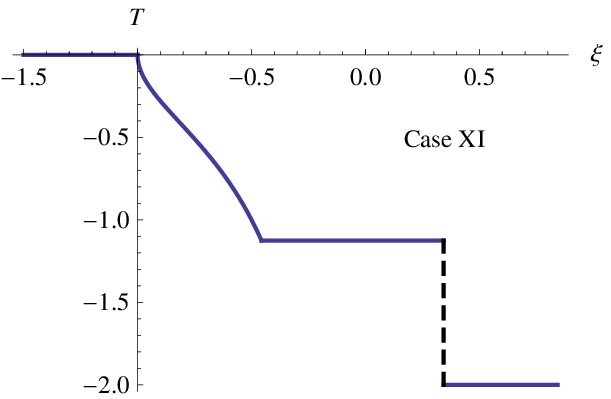}\hspace{2mm}\includegraphics[width=0.4\textwidth]{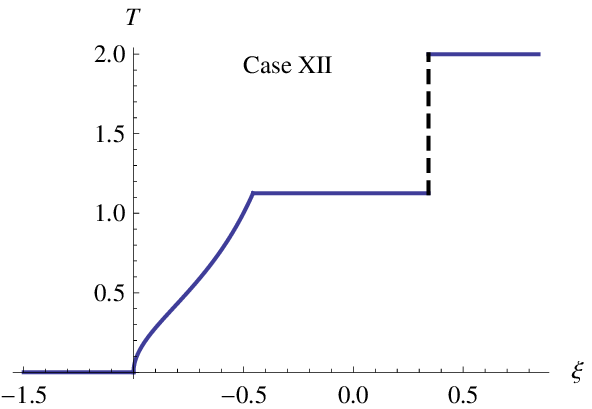}
  \renewcommand{\figurename}{Fig.} \caption{Cauchy stress profiles for different locations of Riemann initial data $U_l$
 (Continued).}\label{fig18}
 \end{center}
\end{figure}

It is easy to observe that if the strain-stress relation is linear,
then there is no rarefaction wave (cf. (\ref{59})). Moreover, it
follows from the first equation in (\ref{59}) and Eq. (\ref{61})
that $\xi_1=\xi_2=\lambda_1, s_2=\lambda_2$.  By (\ref{60}), we
further have
$$(\bar{T},\bar{v})=\left(\frac12\left(T_r+T_l\right),\frac12\sqrt{\frac{\alpha+\beta}{\rho}}(T_r-T_l)
\right).$$ So it is noted that all these results are consistent with
the solution (\ref{57}).

{\bf Case II}  \quad If $T_l<T_r\leq0$, the Riemann solution is
$U_l\overset{S_1}{\longrightarrow}\bar{U}\overset{R_2}{\longrightarrow}U_r.$

{\bf Case III}  \quad If $0<T_r<T_*$, the Riemann solution is
$U_l\overset{S_1}{\longrightarrow}\bar{U}\overset{R_2}{\longrightarrow}(0,v_*)\xrightarrow[s_2=\lambda_2(0)]{S_2}U_r.$

{\bf Case IV}  \quad If $T_*\leq T_r\leq T_{**}$, the Riemann
solution is
$U_l\overset{S_1}{\longrightarrow}\bar{U}\overset{S_2}{\longrightarrow}U_r,$

{\bf Case V}  \quad If $T_{**}<T_r$, the Riemann solution is
$U_l\xrightarrow[s_1=\lambda_1(T_2)]{S_1}U_B\overset{R_1}{\longrightarrow}\bar{U}\overset{S_2}{\longrightarrow}U_r.$

{\bf Case VI} \quad  If $T_r>T_l>0$, the Riemann solution is
$U_l\overset{R_1}{\longrightarrow}\bar{U}\overset{S_2}{\longrightarrow}U_r.$

{\bf Case VII} \quad  If $0\leq T_r<T_l$, the Riemann solution is
$U_l\overset{S_1}{\longrightarrow}\bar{U}\overset{R_2}{\longrightarrow}U_r.$

{\bf Case VIII} \quad  If $-T_l=\hat{T}_*<T_r<0$, the Riemann
solution is
$U_l\overset{S_1}{\longrightarrow}\bar{U}\overset{R_2}{\longrightarrow}(0,v_*)\xrightarrow[s_2=\lambda_2(0)]{S_2}U_r.$

{\bf Case IX} \quad  If $\hat{T}_{**}\leq T_r\leq\hat{T}_*=-T_l$,
the Riemann solution is
$U_l\overset{S_1}{\longrightarrow}\bar{U}\overset{S_2}{\longrightarrow}U_r.$

{\bf Case X} \quad  If $T_r<\hat{T}_{**}$, the Riemann solution is
$U_l\xrightarrow[s_1=\lambda_1(T_3)]{S_1}U_C\overset{R_1}{\longrightarrow}\bar{U}\overset{S_2}{\longrightarrow}U_r.$

{\bf Case XI} \quad  If $T_r<T_l=0$, the Riemann solution is
$U_l\overset{R_1}{\longrightarrow}\bar{U}\overset{S_2}{\longrightarrow}U_r$.

{\bf Case XII} \quad  If $T_r>T_l=0$, the Riemann solution is
$U_l\overset{R_1}{\longrightarrow}\bar{U}\overset{S_2}{\longrightarrow}U_r$.

In summary, there are in total twelve wave patterns depending on the
initial stresses, while for a classical linearly elastic material
there is only one wave pattern.
 Thus, if this case is realized in an experiment, by measuring the wave patterns one
  can determine whether the material is a classical one or the one which belongs
  to the sub-class of new elastic bodies defined through equation (\ref{13}).

\vspace{2mm}
\section*{Acknowledgments}
This work was completed when the first author (S.-J. Huang) was
visiting Professor Huai-Dong Cao at Lehigh University. The first
author would like to thank Professor Cao and Mathematics Department
in Lehigh University for great hospitality.

 \end{document}